\theoremstyle{plain}
\newtheorem{theorem}{Theorem}[section]
\newtheorem{prop}[theorem]{Proposition}
\newtheorem{lemma}[theorem]{Lemma}
\theoremstyle{definition}
\newtheorem{remark}[theorem]{Remark}
\newcommand{\C}{\mathbb{C}}
\newcommand{\R}{\mathbb{R}}
\newcommand{\Z}{\mathbb{Z}}
\newcommand{\CP}{\mathbb{C}\mathrm{P}}
\renewcommand{\tilde}{\widetilde}
\renewcommand{\setminus}{\smallsetminus}
\newcommand{\nin}{/\kern-2.1ex\in}
\newcommand{\norm}[1]{\lVert#1\rVert}
\def\<{\left\langle}
\def\>{\right\rangle}
\def\ind{\operatorname{ind}}
\numberwithin{equation}{section}
\title[Geodesic flows and local Riemann-Roch numbers]{Geodesic flows on spheres and \\ the local Riemann-Roch numbers}
\author[H. Fujita]{Hajime Fujita$^1$}
\author[M. Furuta]{Mikio Furuta$^2$}
 \author[T. Yoshida]{Takahiko Yoshida$^3$}
\subjclass[2000]{} 
\keywords{}
\thanks{$^1$Partly supported by Grant-in-Aid for Young Scientists (B) 23740059.}
\thanks{$^2$Partly supported by Grant-in-Aid for Scientific Research (A) 19204003 and Grant-in-Aid for Scientific Research (B) 19340015.}
\thanks{$^3$Partly supported by Grant-in-Aid for Young Scientists (B) 22740046 and Fujyukai Foundation.}
\address{Department of Mathematical and Physical Sciences, Japan Womens's University, 
2-8-1 Mejirodai, Bunkyo-ku, Tokyo, 112-8681, Japan}
\email{fujitah@fc.jwu.ac.jp}
\address{Graduate School of Mathematical Sciences, The University of Tokyo, 
3-8-1 Komaba, Meguro-ku, Tokyo, 153-8914, Japan}
\email{furuta@ms.u-tokyo.ac.jp}
\address{Department of Mathematics, Graduate School of Science and Technology, Meiji University, 1-1-1 Higashimita, Tama-ku, Kawasaki, 214-8571, Japan}
\email{takahiko@meiji.ac.jp}
\begin{document}

\maketitle

\begin{abstract}
We calculate the local Riemann-Roch numbers of the zero sections of $T^*S^n$ and $T^*\R P^n$,
where the local Riemann-Roch numbers are  defined by using 
the $S^1$-bundle structure on their complements associated to the geodesic flows. 
\end{abstract}


\section{Introduction}

In our previous papers \cite{Fujita-Furuta-Yoshida1}, 
\cite{Fujita-Furuta-Yoshida2}, and \cite{Fujita-Furuta-Yoshida3} we gave a formulation of index for  Dirac-type operators
on open manifolds when some additional structures
are given on the ends of the base manifolds.  A typical example is
the {\it local Riemann-Roch number}  for an open neighborhood of
a singular fiber of (not necessarily completely) integrable system.
In our previous papers we mainly considered the cases of global torus actions.


The explicit examples we calculated  in  \cite{Fujita-Furuta-Yoshida1},
\cite{Fujita-Furuta-Yoshida2}, and \cite{Fujita-Furuta-Yoshida3}  were
2-dimensional cases and their products,
which are integrable systems having tori as singular fibers.
In this paper we determine
the local Riemann-Roch number
of $T^* S^n$ 
when the additional structure is
given by the geodesic flow for the 
standard metric. The two new aspects
of this example is as follows:
(1) The case 
$T^*S^n$ is a Hamiltonian system
having the zero section $S^n$ as singularities. 
(2) On the complement of $S^n$
we have the $S^1$-action induced from the geodesic flow,
while the $S^1$-action cannot be extended to
the whole spaces.


The examples we consider in this paper
may appear as parts of
the completely integrable systems
on the moduli spaces of $SU(2)$ or $SO(3)$ flat connections
on Riemann surfaces constructed in \cite{Jeffrey-Weitsman} \cite{Weitsman} .
A possible application of our framework is to give a direct proof of
the Verlinde formula for this case
based on the properties of our local Riemann-Roch numbers
developed in \cite{Fujita-Furuta-Yoshida2}.
We expect the calculation in this paper will be a key statement  in this application.

%
The case of $T^*S^n$ is the double covering
of the case of $T^*\R P^n$. However their local Riemann-Roch numbers
are not related by the multiplication by $2$.
This observation implies that the local Riemann-Roch number
is a global invariant which is not expressed as integral of some local invariant.

We use the excision formula and the product formula to
 calculate the local Riemann-Roch number of $T^*S^n$.
 We first construct a compactification with a compatible almost complex structure.
 The Riemann-Roch number of the compactification is directly calculated
 by the usual Riemann-Roch formula. What we need is to identify 
 the contribution of the zero section $S^n$.
 The other contributions to the whole Riemann-Roch number
 is calculated by the product formula.
 We obtain the required number by the excision formula and
 a simple subtraction. 


The organization of the present paper is as follows.
In Section~2 we state
a formulation of local (or relative) index in \cite{Fujita-Furuta-Yoshida1}
which will be convenient to use in this paper.
In Section~3 we state our main theorem.
In Section~4 we construct the three compactifications of
$T^*S^n$ with almost complex structures.
In Section~4.4 we compare these three and
show that we can use any of them to calculate
the required local Riemann-Roch numbers.
In Section~5 we show general properties
of local models, in particular, a product formula
for the product of discs or cylinders 
and closed manifolds, which is applied to 
the case of symplectic cuts.
In Section~6 we prove our main theorem.

\subsection{Notations}
Let 
$E\to X$ be a vector bundle with Euclidean metric. 
For a positive number $r$ 
we denote by $D_r(E)$, $D_r^o(E)$ and $S_r(E)$ 
the closed disc bundle, the open disc bundle  
and the circle bundle of radius $r$, respectively.

\section{Circle actions and local Riemann-Roch number}
In this section we recall our setting in \cite{Fujita-Furuta-Yoshida1}, 
which allows us to define the local Riemann-Roch number and 
to have a localization formula of the Riemann-Roch number.  
For our convenience we explain in the category of almost Hermitian manifolds 
with $S^1$-actions. 
See \cite{Fujita-Furuta-Yoshida1} for the general version. 

Let $(M,J,g)$ be an almost Hermitian manifold, i.e., 
$M$ is a smooth manifold, $J$ is an almost complex structure on $M$ and 
$g$ is a Riemannian metric of $M$ which is preserved by $J$. 
Let $(E,\nabla)\to M$ be a Hermitian vector bundle with connection. 
Let $W_E$ be a $\Z/2$-graded Hermitian vector bundle 
$\wedge^{\bullet}TM\otimes E$, which has a structure of 
Clifford module bundle over $TM$.  
Suppose that there exists an open subset $V$ of $M$ 
with an action of the circle group $S^1$ 
which satisfies the following conditions. 
\begin{enumerate}
\item The complement $M\setminus V$ is compact. 
\item The $S^1$-action preserves the 
almost Hermitian structure $(J,g)$ on $V$. 
\item There are no fixed points, i.e., $V^{S^1}=\emptyset$. 
\item The restriction of $(E,\nabla)$ to any $S^1$-orbit 
does not have non-zero parallel sections. 
\end{enumerate}

In \cite{Fujita-Furuta-Yoshida1} we defined the 
{\it relative (or local) index} ${\rm ind}(M,V,W_E)={\rm ind}(M,V)$ 
which satisfies the following. 

\begin{theorem}
The relative index ${\rm ind}(M,V)$ satisfies the following properties. 
\begin{enumerate}
\item If $M$ is closed, then ${\rm ind}(M,V)$ 
coincides with the index of the spin$^c$ Dirac operator 
acting on the space of sections of $W_E$. 
\item ${\rm ind}(M,V)$ is invariant under continuous deformations of the data, i.e., 
$(J,g,E,\nabla)$ and the $S^1$-action on $V$. 
\item ${\rm ind}(M,V)$ satisfies an excision formula. 
\item ${\rm ind}(M,V)$ satisfies a product formula. 
\end{enumerate}
\end{theorem}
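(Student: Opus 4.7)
The plan is to realize $\ind(M,V)$ as the Fredholm index of a zeroth-order deformation of the Dirac-type operator $D$ on $W_E$, and then to derive the four properties from standard index-theoretic machinery. Let $X$ denote the fundamental vector field of the $S^1$-action on $V$; condition (3) ensures $X\ne 0$ on $V$, and condition (4) ensures that $\nabla_X$, acting on sections of $E$ along any orbit, has no parallel mode, hence has spectrum bounded away from zero on each circle orbit with a uniform gap. Pick a smooth $S^1$-invariant cutoff $\rho$ that vanishes on a compact neighborhood of $M\setminus V$ and equals $1$ outside a slightly larger compact set, and put
\begin{equation*}
D^t := D + i\,t\,\rho\, c(X) \qquad (t>0),
\end{equation*}
where $c(X)$ is Clifford multiplication by $X$. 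The definition is $\ind(M,V):=\ind(D^t)$ for $t$ sufficiently large.

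The main analytic step, and the principal obstacle, is the Fredholm property of $D^t$ together with the stability of its index in $t$. A Bochner-type expansion of $(D^t)^{\ast}D^t$ on the support of $\rho$ produces a positive zeroth-order term of order $t^2|X|^2$ arising from $c(X)^2=-|X|^2$, plus a cross term linear in $t$ involving $\nabla_X$ and terms with $d\rho$ supported on a compact set. Fourier-decomposing sections along the $S^1$-orbits in $V$ and invoking the spectral gap of $\nabla_X$ from condition (4), one obtains a uniform positive lower bound for $(D^t)^{\ast}D^t$ away from a fixed compact set once $t$ is large. Condition (1) then forces the operator to be Fredholm on appropriate Sobolev completions, and the index is independent of large $t$ by homotopy invariance. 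This is the mechanism developed in \cite{Fujita-Furuta-Yoshida1}.

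With the analytic setup in hand the four properties are essentially formal. For (1), when $M$ is closed, $s\mapsto D+is\rho\,c(X)$ is a continuous path of bounded perturbations of the elliptic operator $D$ through Fredholm operators, so the index is constant in $s$ and equals $\ind D$ at $s=0$, which is the spin$^c$ Dirac index on $W_E$. For (2), a continuous deformation of $(J,g,E,\nabla)$ together with the $S^1$-action on $V$ produces a continuous family of Fredholm operators $D^t$ (one can choose $t$ uniformly large along compact parameter intervals, since the spectral bound depends continuously on the data), so the index is invariant. For (3), the excision formula follows by cutting both $(M,V)$ and a second configuration $(M',V')$ agreeing near the cores along an $S^1$-invariant hypersurface lying deep in the region where $D^t$ is uniformly invertible, then applying a standard relative-index gluing argument of Gromov--Lawson type. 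Finally, for (4), on a product $(M_1\times M_2,\ V_1\times M_2)$ with $M_2$ closed almost Hermitian one has the identity
\begin{equation*}
D^t_{E_1\boxtimes E_2}\ =\ D^t_{E_1}\,\widehat{\otimes}\,1\ +\ 1\,\widehat{\otimes}\, D_{E_2}
\end{equation*}
compatibly with the $\Z/2$-grading, and multiplicativity of the Fredholm index under graded tensor sums gives the product formula. The Fredholm estimate of the second paragraph is the only substantive step; everything else is formal manipulation of the resulting Fredholm theory.
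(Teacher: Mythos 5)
This theorem is not proved in the paper at all; it is quoted from \cite{Fujita-Furuta-Yoshida1}, so the comparison is with the construction there. Your proposal has a genuine gap at exactly the step you call ``the only substantive step'': the operator $D^t=D+it\rho\,c(X)$ is not Fredholm, and the claimed uniform lower bound for $(D^t)^*D^t$ outside a compact set fails. Expanding, one gets (away from $\supp d\rho$)
\begin{equation*}
(D+itc(X))^2 = D^2 + it\,c(\nabla X) - 2it\nabla_X + t^2|X|^2 .
\end{equation*}
Decompose along an orbit of length $2\pi|X|$ into weight-$m$ modes, on which $\nabla_X$ acts as $i(m+a)$ with $a$ determined by the holonomy of $(E,\nabla)$. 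The orbit-direction part of $D^2$ contributes $(m+a)^2/|X|^2$, the cross term contributes $2t(m+a)$, and the zeroth-order term contributes $t^2|X|^2$; their sum is the perfect square $\bigl((m+a)/|X|+t|X|\bigr)^2$. For every large $t$ there are integers $m$ with $m+a\approx -t|X|^2$, and on these resonant modes the positive part is $O(1)$ while the indefinite terms $it\,c(\nabla X)$ and the curvature terms are $O(t)$, so positivity fails. Note also that condition (4) plays no role in your estimate: it excludes $m+a=0$, whereas the dangerous modes are those with $|m+a|$ large. This resonance is precisely why the Braverman/Tian--Zhang deformation by $c(X)$ defines only an \emph{equivariant} index (a formal character, computed weight by weight with $t\to\infty$ after the weight is fixed) and needs a global $S^1$-action --- which is unavailable here, since the action exists only on $V$ and the desired output is a single integer.

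The mechanism of \cite{Fujita-Furuta-Yoshida1} replaces your zeroth-order perturbation by a first-order one along the orbits: $D_t=D+tD'$, where $D'$ is a fiberwise Dirac-type operator built from the de Rham complex of each orbit twisted by $E$, acting on $W_E=\wedge^\bullet TM\otimes E$ through the Clifford module structure. Condition (4) says exactly that $D'$ is invertible on every orbit, so $t^2(D')^2\gtrsim t^2(m+a)^2$ grows quadratically in the weight and dominates the cross terms (which are first order only transversally to the orbits); this closes the estimate that your deformation cannot. Your derivations of (1)--(4) from Fredholmness are the standard formal arguments and would survive once the correct deformation is substituted, with the caveat for (2) that the index is only locally constant along deformations that preserve condition (4) --- crossing a wall where an orbit acquires a parallel section changes the index, as Theorem~\ref{RRofBS} of the paper makes quantitatively explicit.
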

The relative index does not depend on the choice of the neighborhood of 
the complement $K:=M\setminus V$. 
Namely when $M'$ is a neighborhood of $K$, if we put $V':=M'\setminus K$,  
we have $\ind(M,V)=\ind(M',V')$ from the excision formula (3). 
Although the relative index depends on the data on 
a neighborhood of $K$ we write $\ind_{\rm loc}(K)$ for this index 
if there are no confusion. 
If $M$ is a symplectic manifold with a compatible almost complex structure 
and $(E,\nabla)$ is a prequantizing line bundle, 
then we denote it by ${\rm ind}(M,V)=RR(M,V)$ or 
$\ind_{\rm loc}(K)=RR_{\rm loc}(K)$ and 
call it the {\it local (or relative) Riemann-Roch number}.


\section{Main theorem}
Let $S^n$ be the unit sphere centered at the origin in the $(n+1)$-dimensional Euclidean space $\R^{n+1}$. 
We denote by $V_0$ the total space of the cotangent bundle of $S^n$ and 
by $V_{01}$ the complement of the zero section in $V_0$. 
We identify the cotangent bundle $V_0$ with the tangent bundle $TS^n$ 
via the Riemannian metric. 
Hence $V_0$ is identified with the set of pairs of vectors 
$(x,v)$ in $\R^{n+1}$ with $||x||=1$ and $x\cdot v=0$, 
where $x\cdot v$ is the standard Euclidean inner product and 
$||\cdot||$ is the induced norm. 


Consider the Liouville 1-form $\alpha$ and 
the standard symplectic structure $d\alpha$ of $V_0$. 
The Hamiltonian action for the Hamiltonian function 
$$
h_{01}:V_{01}\to \R, \ h_{01}(x,v):=||v||
$$is periodic with the period $2\pi$, and hence, 
induces a Hamiltonian (free) $S^1$-action on $V_{01}$. 
For $(x,v)\in V_{01}$ let $\R\langle x,v\rangle$ be the oriented 2-plane 
generated by $x,v\in \R^{n+1}$. 
The $S^1$-action on $V_{01}$ is given by the standard rotation action of 
$S^1=SO(2)$ on $\R\langle x,v\rangle$, which we call the normalized geodesic flow\footnote{It is well-known that the Hamiltonian vector field for the Hamiltonian function  $h_0(x,v):=\frac{1}{2}||v||^2$ gives the geodesic flow on $V_0$. Though this flow is periodic, it does not give an $S^1$-action in general. We need an $S^1$-action, so we use $h_{01}$ insead of $h_0$. 
}.

Consider the trivial bundle $V_0\times \C$ with the connection 1-form $\alpha$, which gives a prequantizing line bundle over $V_0$. 
Using the above $S^1$-action 
we can define the relative Riemann-Roch number $RR_{\rm loc}(S^n)$ 
of the zero section $S^n\subset V_0$, 
where we use the $S^1$-invariant $d\alpha$-compatible 
almost complex structure $J_0$, 
which is defined by the Levi-Civita connection of $S^n$ as in Subsection~4.4. 
The following is the main theorem in the present paper. 
\begin{theorem}\label{RRofSn}
$$
RR_{\rm loc}(S^n)=\left\{
\begin{array}{ll}
2 \quad (n=0) \\ 
1 \quad (n\geq 1). 
\end{array}\right.
$$
\end{theorem}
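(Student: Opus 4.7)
The plan is to follow the strategy outlined in the introduction: construct a closed almost complex compactification of $V_0$, compute its global Riemann-Roch number directly, and use the excision and product formulas from Section~2 and Section~5 to peel off the contributions away from the zero section. The trivial case $n = 0$ is handled immediately: $V_0 = T^*S^0$ is already compact (two points), so by part~(1) of the theorem from Section~2 the local Riemann-Roch number equals the classical index, giving $RR_{\rm loc}(S^0) = 2$.

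For $n \geq 1$, the first step is to construct a closed almost complex manifold $(\overline{V}, \bar{J})$ together with an extension of the prequantum line bundle, such that $V_0 \subset \overline{V}$ is open and the complement $B := \overline{V} \setminus V_0$ is a compact almost complex submanifold to which the normalized-geodesic $S^1$-action extends smoothly. A natural candidate is the complex quadric
$$
Q^n = \{[z] \in \CP^{n+1} \,:\, z_0^2 + \cdots + z_{n+1}^2 = 0\},
$$
which is a classical compactification of $T^*S^n$ with $B = Q^{n-1}$ identifiable with the space of oriented great circles $\mathrm{Gr}^+(2, n+1)$; equivalently, $\overline{V}$ can be realized as the symplectic cut of $V_0$ at some level of $h_{01}$. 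The three compactifications in Section~4 and their comparison in Section~4.4 are precisely what is needed to justify the freedom to move between these models. I would then compute $RR(\overline{V})$ from the Hirzebruch-Riemann-Roch formula, and apply the excision formula (part~(3) of the theorem in Section~2) to an $S^1$-invariant cover of $\overline{V}$ by tubular neighborhoods of $S^n$ and $B$, obtaining
$$
RR(\overline{V}) \;=\; RR_{\rm loc}(S^n) + RR_{\rm loc}(B).
$$

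Next, a tubular neighborhood of $B$ in $\overline{V}$ is an $S^1$-equivariant disc bundle on which the action rotates the disc fibers. The product formula from Section~5 then reduces $RR_{\rm loc}(B)$ to $RR(B)$ multiplied by the local Riemann-Roch number of the standard rotation-invariant disc, both of which are explicit computations. Subtracting yields
$$
RR_{\rm loc}(S^n) \;=\; RR(\overline{V}) - RR_{\rm loc}(B),
$$
and a direct check should give $1$ for every $n \geq 1$. The hard part will be the product-formula step: it requires identifying the normal bundle of $B$ in $\overline{V}$ as an $S^1$-equivariant complex line bundle carrying the correct restriction of the extended prequantum bundle, and matching $\bar{J}$ near $B$ with the structure used in the product-formula model. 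The freedom to switch between the three compactifications from Section~4 is exactly what makes this matching tractable, since each model is adapted to one of the three ingredients (almost complex structure, circle action, and line bundle) one wants to compare.
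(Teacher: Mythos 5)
Your overall strategy coincides with the paper's, but there is one concrete gap: the two-term excision identity $RR(\overline{V})=RR_{\rm loc}(S^n)+RR_{\rm loc}(B)$ is not justified as stated, and is in fact false for a general choice of compactification. The relative index is only defined on the open set carrying the $S^1$-action under the hypothesis that the prequantum line bundle restricted to every orbit has no nonzero parallel sections (condition (4) of Section~2). Since the integral of the Liouville form over the orbit through $(x,v)$ is $2\pi\norm{v}$, the holonomy is trivial exactly when $\norm{v}$ is an integer, so the index localizes not only at the zero section and at the divisor at infinity but also at every Bohr--Sommerfeld level $h_{01}^{-1}(l)$ with $l$ a positive integer below the cut level. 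Moreover the symplectic cut $X_r$ carries an induced prequantizing line bundle only when $r$ is a positive integer $k$, and the correct localization reads
$$
RR(X_k)=RR_{\rm loc}(S^n)+\sum_{l=1}^{k-1}RR_{\rm loc}\bigl(h_{01}^{-1}(l)\bigr)+RR_{\rm loc}(Q_{n-1}),
$$
so your formula omits the middle sum. You never check the holonomy condition, which is the mechanism deciding where the index concentrates; without it the decomposition into ``tubular neighborhoods of $S^n$ and $B$'' has no meaning, because the $S^1$-action on the complement of those two sets does not satisfy the hypotheses needed to define the relative index.

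The gap is repairable in two ways, and otherwise your outline matches the paper. Either cut at level $k=1$, where the middle sum is empty and your two-term formula becomes correct: then $RR(X_1)=RR(Q_n,\mathcal{O}(1))=n+2$ and $RR_{\rm loc}(B)=RR(Q_{n-1},\mathcal{O}(1))=n+1$ by the product formula, giving $RR_{\rm loc}(S^n)=1$. Or keep a general $k$, evaluate each Bohr--Sommerfeld contribution by the same product formula as $RR(Q_{n-1},\mathcal{O}(l))$, and verify the binomial identity $\sum_{l=0}^{k}RR_{n-1}^{l}=RR_{n}^{k}$ as the paper does. Your remaining points --- the need to compare $J_0$ with the complex structure of the quadric through $d\alpha$-compatible homotopies, and to identify the normal bundle of $B$ with $\mathcal{O}(1)|_{Q_{n-1}}$ equivariantly --- are correctly flagged and are exactly what Section~4.4 supplies.
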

Since the integral of $\alpha$ along the $S^1$-orbit of $(x,v)\in V_{01}$ 
is given by $2\pi||v||$, the holonomy along the orbit is equal to 
$\exp(2\pi\sqrt{-1}||v||)$.  
In particular the following holds. 

\begin{lemma}\label{BSorbits}
The holonomy along the $S^1$-orbit of $(x,v)\in V_{01}$ 
is trivial if and only if 
the norm of $v$ is an integer. 
\end{lemma}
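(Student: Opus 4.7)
The plan is to observe that the lemma is immediate from the preceding sentence combined with the elementary fact that $\exp(2\pi\sqrt{-1}r)=1$ if and only if $r\in\Z$, so the real work is to verify the assertion preceding the lemma. I would split this verification into two ingredients: (i) the evaluation $\int_\gamma \alpha = 2\pi\|v\|$ along the $S^1$-orbit $\gamma$ through $(x,v)$, and (ii) the identification of this integral with the holonomy of the prequantizing connection on $V_0\times\C$.

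For (i) I would exploit the tautological character of the Liouville $1$-form. Under the metric identification $V_0\cong TS^n$, one has $\alpha_{(x,v)}(W)=g(v,\pi_*W)$ for $W\in T_{(x,v)}V_0$, where $\pi\colon V_0\to S^n$ is the bundle projection. The geodesic vector field $Y$, namely the Hamiltonian vector field of $h_0=\tfrac12\|v\|^2$, projects to $v$ itself, so $\alpha(Y)=\|v\|^2$. Since the generator of the $S^1$-action is the normalized geodesic vector field $X=Y/\|v\|$, this gives $\alpha(X)=\|v\|$, and integrating along an orbit of period $2\pi$ yields
$$\int_\gamma\alpha=\int_0^{2\pi}\alpha(X)\,dt=2\pi\|v\|.$$

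For (ii) I would note that parallel transport for the connection on the trivial Hermitian line bundle $V_0\times\C$ with connection $1$-form $\alpha$ is governed by the ODE $\dot s=\sqrt{-1}\,\alpha(\dot\gamma)\,s$ along $\gamma$, whose solution around a closed loop yields holonomy $\exp\!\bigl(\sqrt{-1}\oint_\gamma\alpha\bigr)$. Combining with (i), this equals $\exp(2\pi\sqrt{-1}\|v\|)$, which is $1$ precisely when $\|v\|\in\Z$, proving the lemma.

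The only potentially delicate step will be keeping straight the sign and normalization conventions for the Hamiltonian flow and for the holonomy formula, but this is bookkeeping rather than a real obstacle, so the argument should be very short.
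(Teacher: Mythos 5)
Your proposal is correct and follows exactly the route the paper intends: the paper offers no separate proof beyond the sentence preceding the lemma, which asserts $\oint_\gamma\alpha=2\pi\norm{v}$ and holonomy $\exp(2\pi\sqrt{-1}\norm{v})$, and your two steps (i) and (ii) are precisely the verification of that assertion. The computation $\alpha(X)=\norm{v}$ via the tautological property of the Liouville form and the normalization $X=Y/\norm{v}$ is the right one, and your caveat about normalization conventions is apt since the paper itself writes the connection as $\alpha$ in Section~3 but as $d-2\pi\sqrt{-1}\alpha$ in Section~6.
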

Let $l$ be a positive integer. 
By the above lemma for any small neighborhood $M_l$ of $h_{01}^{-1}(l)$ 
we can define the relative Riemann-Roch number 
$RR(M_l,M_l\setminus h_{01}^{-1}(l))=RR_{\rm loc}(h_{01}^{-1}(l))$. 
We will also show the following. 
\begin{theorem}\label{RRofBS}
$$
RR_{\rm loc}(h_{01}^{-1}(l))=
\left(\begin{array}{cc}
n+l-1 \\ n-1
\end{array}\right) + 
\left(\begin{array}{cc}
n+l-2 \\ n-1
\end{array}\right). 
$$
\end{theorem}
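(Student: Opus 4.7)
The plan is to reduce the computation to the Riemann--Roch number of the symplectic quotient $M_l := h_{01}^{-1}(l)/S^1$, identify this quotient with the complex quadric $Q_{n-1} \subset \CP^n$, and finish with a short cohomology and binomial calculation. Since $l > 0$ is a regular value of $h_{01}$, the $S^1$-action is free on the compact level set $h_{01}^{-1}(l) = \{(x,v) : ||x|| = 1,\ x \cdot v = 0,\ ||v|| = l\}$. The action rotates the orthonormal pair $(x, v/l)$ in the oriented $2$-plane they span, so the assignment $(x,v) \mapsto [x + \sqrt{-1}\, v/l]$ descends to a diffeomorphism
$$M_l \cong Q_{n-1} := \{[z_0 : \cdots : z_n] \in \CP^n : z_0^2 + \cdots + z_n^2 = 0\},$$
endowing the reduced space with the structure of a complex manifold of complex dimension $n-1$.

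Next, by the equivariant symplectic neighborhood theorem for free Hamiltonian actions, a neighborhood of $h_{01}^{-1}(l)$ in $V_0$ is $S^1$-equivariantly symplectomorphic to $M_l \times U$, where $U$ is an invariant neighborhood of the Bohr--Sommerfeld orbit $\{r = l\}$ in $T^*S^1$, with $S^1$ acting only on $U$. The prequantum bundle $(V_0 \times \C, d - \sqrt{-1}\,\alpha)$ decomposes correspondingly into the external tensor product of the reduced prequantum bundle $(\CL_l, \nabla_l) \to M_l$ and the prequantum bundle on the cylinder factor. Combining the product formula of Section~5 with the model computation that each Bohr--Sommerfeld orbit in $T^*S^1$ contributes $1$ to the local index, I obtain
$$RR_{\rm loc}(h_{01}^{-1}(l)) = RR(M_l, \CL_l).$$
A curvature comparison then identifies $\CL_l$ with $\CO_{Q_{n-1}}(l) := \CO_{\CP^n}(l)|_{Q_{n-1}}$, since the reduced symplectic form $\omega_l$ equals the restriction of $l$ times the Fubini--Study form under the embedding above.

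Finally, the short exact sequence associated with the quadric hypersurface,
$$0 \to \CO_{\CP^n}(l-2) \to \CO_{\CP^n}(l) \to \CO_{Q_{n-1}}(l) \to 0,$$
together with the standard formula $\chi(\CP^n, \CO(k)) = \binom{n+k}{n}$ (with the convention $\binom{k}{n} = 0$ for $k < n$), yields
$$\chi(Q_{n-1}, \CO(l)) = \binom{n+l}{n} - \binom{n+l-2}{n}.$$
Two applications of Pascal's identity rewrite this as $\binom{n+l-1}{n-1} + \binom{n+l-2}{n-1}$, matching the claim. The main obstacle I expect is the second paragraph: verifying the equivariant product structure around the Bohr--Sommerfeld level and tracking the prequantum connection precisely enough to identify the descended bundle as $\CO_{Q_{n-1}}(l)$ rather than some other twist. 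Once this identification is in place, the remaining arithmetic is routine.
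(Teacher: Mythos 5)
Your overall strategy is the paper's: reduce $RR_{\rm loc}(h_{01}^{-1}(l))$ to the Riemann--Roch number of the reduced space $M_l=h_{01}^{-1}(l)/S^1\cong Q_{n-1}$ with line bundle $\CO(l)|_{Q_{n-1}}$ (this is exactly Theorem~\ref{formulaforRR} combined with the identifications of Remarks~\ref{normalbundle} and \ref{normalbundle2}), and then compute that number. One imprecision in your second paragraph: a neighborhood of $h_{01}^{-1}(l)$ is \emph{not} equivariantly a global product $M_l\times U$ with $U\subset T^*S^1$, because the principal bundle $P_{(l)}=h_{01}^{-1}(l)\to M_l$ is the unit circle bundle of $\gamma_2\cong\CO(1)|_{Q_{n-1}}$ and is nontrivial (already for $n=2$ it is $SO(3)\to S^2$). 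The correct local model is the twisted product $P_{(l)}\times(-\varepsilon,\varepsilon)=P_{(l)}\times_{S^1}\bigl(S^1\times(-\varepsilon,\varepsilon)\bigr)$, which is exactly the model $M_{BS}$ of Section~5, and the paper's product formula is formulated for such associated bundles; with that correction your reduction $RR_{\rm loc}(h_{01}^{-1}(l))=RR(M_l,\CL_l)$ is Theorem~\ref{formulaforRR} verbatim. Your identification of $\CL_l$ with $\CO(l)|_{Q_{n-1}}$ via curvature plus simple connectivity is legitimate and is essentially how the paper argues. Where you genuinely diverge is the final computation: the paper (Lemma~\ref{dim O(l)}) computes $\dim H^0(Q_{n-1},\CO(l))$ via the branched double cover $Q_{n-1}\to\CP^{n-1}$ and the $\pm$-eigenspace decomposition under the involution, which then requires Kodaira vanishing to equal the Riemann--Roch number; you instead compute the Euler characteristic directly from the hypersurface sequence $0\to\CO_{\CP^n}(l-2)\to\CO_{\CP^n}(l)\to\CO_{Q_{n-1}}(l)\to 0$, giving $\binom{n+l}{n}-\binom{n+l-2}{n}$, which Pascal's identity converts to the stated answer. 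Your route has the small advantage of not needing any vanishing theorem, since the local index is by definition the Euler characteristic; the paper's route additionally yields the explicit description of the sections used elsewhere. Both are correct.
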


\begin{remark}
(1) The isometric action of $O(n+1)$ on $S^n$ extends to all the constructions in the subsequent sections. 
Using the corresponding  equivariant version of the localization, we can 
show that the above identification of the local Riemann-Roch number in Theorem~\ref{RRofSn} still  holds as virtual  $O(n+1)$ representation, i.e.,
the equivariant local Riemann-Roch character is equal to $1$ for $n>0$.
It implies that for any finite subgroup $G$ in $O(n+1)$ acting freely on $S^n$,
the local Riemann-Roch number of $S^n/G$ in $T(S^n/G)$ is equal to $1$ for $n>0$. \\
(2) In the setting of (1), for any nontrivial representation $\rho\colon G\to \{\pm 1\}$ we can twist the prequantizing line bundle by $\rho$ to obtain another prequantizing line bundle on the quotient space $T(S^n/G)$. The local Riemann-Roch number $S^n/G$ in $T(S^n/G)$ for this twisted prequantizing line bundle is similarly calculated and turns out to be $0$ for $n>0$. In particular, by taking $\{\pm 1\}$ as $G$ and taking the identity map as $\rho$ we can show that the local Riemann-Roch number of $\R P^n=S^n/G$ in $T\R P^n=TS^n/G$ with the twisted prequantizing line bundle is equal to $0$ for $n>0$. \\
%
(3) The above two examples imply that the local Riemann-Roch number
is not always multiplicative with respect to finite covering.
\end{remark}

%
%

\section{Compactification of the cotangent bundle}
\subsection{Symplectic cut}
For the prequantized symplectic manifold 
$(V_0,d\alpha,V_0\times\C,\alpha)$ and the 
Hamiltonian $S^1$-action on $V_{01}$ we can apply the construction of 
{\it symplectic cut} due to E.~Lerman\cite{Lerman}. 
Namely for $r>0$ let $\hat h_r:V_{01}\times \C\to \R$ 
be the function defined by 
$$
\hat h_r(x,v,w):=||v||+\frac{1}{2}|w|^2-r. 
$$
$\hat h_r$ is a Hamiltonian for the diagonal 
$S^1$-action on $V_{01}\times \C$ 
and $0$ is a regular value of $\hat h_r$. 
We define the symplectomorphism  $\varphi\colon D^o_r(T^*S^n)\cap V_{01}\to \{ (x,v,w)\in \hat h_r^{-1}(0)\colon w\neq 0\}/S^1$ by
\[
\varphi(x,v)=\left[x,v,\sqrt{2(r-\norm{v})}\right]. 
\]
Then the symplectic cut $X_r$ is defined by gluing $D^o_r(T^*S^n)$ and $\hat h_r^{-1}(0)/S^1$ by $\varphi$, i.e.,  
$$
X_r:=D_r^o(T^*S^n)\cup_\varphi \hat h_r^{-1}(0)/S^1.
$$
By definition $X_r$ can be identified with a union $D^o_r(T^*S^n)\cup h_{01}^{-1}(r)/S^1$ as a set.  
Note that $X_r$ is a smooth symplectic manifold, and 
if $r$ is a positive integer, then there exists the 
induced prequantizing line bundle $L_r$ over $X_r$. 
We put $M_{(r)}:=h^{-1}_{01}(r)/S^1$, which is a symplectic submanifold 
of $X_r$. The normal bundle $\nu_{(r)}$ of $M_{(r)}$ in $X_r$ is isomorphic to 
$h^{-1}_{01}(r)\times_{S^1}\C_1$, where $\C_1$ is the one-dimensional 
representation of $S^1$ of weight 1. 

\begin{lemma}\label{embednu}
The following gives an embedding of $\nu_{(r)}$ into $X_r$ as a 
tubular neighborhood of $M_{(r)}$: 
$$
\nu_{(r)}=h^{-1}_{01}(r)\times_{S^1}\C_1 \ni [ x,v,z]\mapsto 
\left[x,\frac{v}{1+|z|^2}, \frac{\sqrt{2r}z}{\sqrt{1+|z|^2}}\right]\in \hat h_r^{-1}(0)/S^1\subset X_r.  
$$
\end{lemma}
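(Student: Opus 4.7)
The plan is to prove the lemma by constructing an explicit smooth inverse to the map $\Phi\colon\nu_{(r)}\to X_r$ given by the stated formula. Once such an inverse is produced on an open neighborhood of $M_{(r)}$, the embedding and tubular-neighborhood statements follow at once.

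First I would check that $\Phi$ is well defined. A direct substitution using $||v||=r$ shows that $\hat h_r$ vanishes on the image point, since $\frac{r}{1+|z|^2}+\frac{r|z|^2}{1+|z|^2}-r=0$. The formula is manifestly equivariant with respect to the diagonal $S^1$-actions on $h_{01}^{-1}(r)\times\C_1$ and on $V_{01}\times\C$, so it descends to a smooth map on the associated bundle $\nu_{(r)}=h_{01}^{-1}(r)\times_{S^1}\C_1$.

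Next I would construct the inverse. Because $V_{01}$ omits the zero section of $T^*S^n$, every point of $\hat h_r^{-1}(0)$ satisfies $||v'||>0$, hence $|w'|^2=2(r-||v'||)<2r$ identically. This allows me to define
$$
\Psi([x',v',w']):=\left[x',\,\frac{2rv'}{2r-|w'|^2},\,\frac{w'}{\sqrt{2r-|w'|^2}}\right]
$$
as a smooth $S^1$-equivariant map $\hat h_r^{-1}(0)/S^1\to\nu_{(r)}$. Using $||v'||=r-|w'|^2/2$ one sees that the middle coordinate has norm $r$, so $\Psi$ lands in $h_{01}^{-1}(r)\times_{S^1}\C_1$; a short substitution then verifies that $\Psi\circ\Phi=\id_{\nu_{(r)}}$ and $\Phi\circ\Psi=\id$ on $\hat h_r^{-1}(0)/S^1$.

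It follows that $\Phi$ is a diffeomorphism onto the open subset $\hat h_r^{-1}(0)/S^1\subset X_r$, which contains $M_{(r)}$ as the locus $w'=0$, and $\Phi$ restricts to the canonical identification on the zero section $\{z=0\}$; hence it realizes $\nu_{(r)}$ as a tubular neighborhood of $M_{(r)}$. The step requiring the most care is verifying that $\Psi$ extends smoothly across $M_{(r)}$, not merely off of it; this is the content of the uniform bound $|w'|^2<2r$, which is a consequence of $V_{01}$ excluding the zero section and is what separates this situation from a generic inverse function argument restricted to the complement of $M_{(r)}$.
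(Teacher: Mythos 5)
Your proof is correct: the paper states Lemma~\ref{embednu} without any proof, and your direct verification --- checking that the image point satisfies $\hat h_r=0$, that the formula is equivariant for the diagonal $S^1$-actions and hence descends, and exhibiting the explicit smooth inverse $\Psi$ using $2r-|w'|^2=2\norm{v'}>0$ on the level set --- is exactly the computation the authors leave to the reader, and it does identify $\nu_{(r)}$ with the open set $\hat h_r^{-1}(0)/S^1\subset X_r$ containing $M_{(r)}$ as the zero locus of $w'$. One small quibble with your closing remark: the bound $|w'|^2<2r$ is what makes $\Psi$ smooth at the \emph{outer} end of $\hat h_r^{-1}(0)/S^1$ (where $\norm{v'}\to 0$), not across $M_{(r)}$ itself, where $w'=0$ and the denominators are harmless; this does not affect the validity of the argument.
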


\begin{remark}
Though the smooth manifolds $X_r$ for different $r$ are 
all diffeomorphic to each other, 
$(X_r, L_r)$ are not mutually isomorphic as prequantized symplectic manifolds. 
\end{remark}
\subsection{Grassmannian}
For three nonnegative integers $a_0, a_1$ and $N$ 
with $a_0+a_1+1=N$, two oriented Grassmannians 
${\rm Gr}_{a_0+1}^+(\R^{N+1})$ and ${\rm Gr}_{a_1+1}^+(\R^{N+1})$ 
can be identified by taking orthogonal complements. 
We put $X_{a_0,a_1,N}={\rm Gr}_{a_0+1}^+(\R^{N+1})={\rm Gr}_{a_1+1}^+(\R^{N+1})$. 
For $i=0,1$, let $\gamma_{a_i}$ be the total space of 
the tautological $\R^{a_i}$-bundle 
over ${\rm Gr}_{a_i}^+(\R^{N})$ and $\gamma_{a_i}^{\perp}$ the 
orthogonal complement bundle of $\gamma_{a_i}$ in 
${\rm Gr}_{a_i}^+(\R^N)\times \R^N$. 

\begin{lemma}
$X_{a_0,a_1,N}$ is diffeomorphic to the manifold 
$\gamma_{a_0}^{\perp}\sqcup \gamma_{a_1}^{\perp}/\sim$, where 
$(P_0,v_0)\sim (P_1,v_1)$ for $(P_i,v_i)\in \gamma_{a_i}^{\perp}$ 
if $v_0,v_1\neq 0$ and the following three relations 
are satisfied: 
$$
P_0\perp P_1, \quad \frac{v_0}{||v_0||}+\frac{v_1}{||v_1||}=0, \quad  
||v_0||\cdot ||v_1||=1. 
$$
\end{lemma}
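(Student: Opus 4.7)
The plan is to construct two explicit smooth embeddings $\phi_i\colon \gamma_{a_i}^\perp \hookrightarrow X_{a_0,a_1,N}$ for $i=0,1$ whose images cover $X_{a_0,a_1,N}$ and whose overlap is identified precisely by the stated relations. Fix the orthogonal splitting $\R^{N+1}=\R^N\oplus \R e_{N+1}$, and, using the identification $X_{a_0,a_1,N}={\rm Gr}_{a_i+1}^+(\R^{N+1})$, define
\[
\phi_i(P_i,v_i):=P_i\oplus \R\langle v_i+e_{N+1}\rangle,
\]
oriented by the orientation of $P_i$ followed by the vector $v_i+e_{N+1}$.

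First I would verify that each $\phi_i$ is a smooth embedding and identify its image. Smoothness is immediate since $v_i+e_{N+1}$ has $e_{N+1}$-component equal to $1$ and hence never lies in $P_i\subset \R^N$, so the ``add-a-line'' construction is always defined. Injectivity follows by recovering $(P_i,v_i)$ from $Q:=\phi_i(P_i,v_i)$ as $P_i=Q\cap \R^N$ together with the unique vector in $Q$ of unit $e_{N+1}$-component whose $\R^N$-part is orthogonal to $P_i$. The image of $\phi_0$ consists of the oriented $(a_0+1)$-planes $Q$ with $Q\not\subset \R^N$, while, after passing through the orthogonal-complement identification, the image of $\phi_1$ consists of those $Q$ with $e_{N+1}\notin Q$. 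Since $Q\subset \R^N$ forces $e_{N+1}\notin Q$, the two images together cover all of $X_{a_0,a_1,N}$.

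The heart of the proof is the overlap computation. Given $(P_0,v_0)$ with $v_0\neq 0$, set $Q=\phi_0(P_0,v_0)$ and compute
\[
Q^\perp=\{u'-(u'\cdot v_0)e_{N+1}\colon u'\in P_0^\perp\cap \R^N\}.
\]
Then $P_1:=Q^\perp\cap \R^N$ is the orthogonal complement of $\R v_0$ inside $P_0^\perp\cap \R^N$, so in particular $P_0\perp P_1$ and $P_1^\perp\cap \R^N=P_0\oplus \R v_0$. The remaining datum $v_1$ is determined by requiring $v_1+e_{N+1}\in Q^\perp$ and $v_1\perp P_1$; these two conditions force $v_1\in(P_0^\perp\cap \R^N)\cap(P_0\oplus \R v_0)=\R v_0$, and the equation $v_1\cdot v_0=-1$ then yields $v_1=-v_0/\|v_0\|^2$. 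The three stated relations $P_0\perp P_1$, $v_0/\|v_0\|+v_1/\|v_1\|=0$, and $\|v_0\|\cdot\|v_1\|=1$ now drop out immediately, and symmetry in $i$ shows that the resulting transition is a diffeomorphism of the common overlap.

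The main obstacle I anticipate is bookkeeping orientations through the identification ${\rm Gr}_{a_0+1}^+(\R^{N+1})={\rm Gr}_{a_1+1}^+(\R^{N+1})$ by orthogonal complement: the linear-algebra computation above works verbatim for unoriented subspaces, but to ensure that $\phi_0(P_0,v_0)$ and $\phi_1(P_1,v_1)$ agree as \emph{oriented} points of $X_{a_0,a_1,N}$, and in particular that the sign produced for $v_1$ indeed gives $v_0/\|v_0\|+v_1/\|v_1\|=0$ rather than a $-$ sign, one must fix a consistent orientation convention on orthogonal complements and track it through the derivation. Once that convention is pinned down, the remaining verifications are routine.
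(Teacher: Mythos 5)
Your construction $\phi_i(P_i,v_i)=P_i\oplus\R\langle v_i+e_{N+1}\rangle$ is exactly the map $(P_i,v_i)\mapsto (P_i\oplus\{0\})\oplus\R(v_i\oplus 1)$ used in the paper, and your inverse (recovering $P_i=Q\cap\R^N$ and the unique vector of $e_{N+1}$-component $1$) is the paper's inverse map as well, so the proposal is correct and follows essentially the same route. Your explicit overlap computation deriving $v_1=-v_0/\|v_0\|^2$, and hence the three relations, is a welcome elaboration of a step the paper only asserts; the orientation bookkeeping you flag is handled by the convention that $P_0$ is oriented so that an oriented basis of $P_0$ followed by $v_0\oplus 1$ is an oriented basis of $P$.
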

\begin{proof}
For $(P_i,v_i)\in \gamma_{a_i}^{\perp}$ let $P$ be the 
subspace $(P_i\oplus\{0\})\oplus\R(v_i\oplus 1)$ of $\R^{N+1}$. 
The map $\gamma_{a_i}^{\perp}\to X_{a_0,a_1,N}$, $(P_i,v_i)\mapsto P$ is compatible with the identification $\sim$ and induces the map $\gamma_{a_0}^{\perp}\sqcup \gamma_{a_1}^{\perp}/\sim\to X_{a_0,a_1,N}$.  

We next construct the inverse map $X_{a_0,a_1,N}\to \gamma_{a_0}^{\perp}\sqcup \gamma_{a_1}^{\perp}/\sim$ 
as follows. 
Suppose that $P\in X_{a_0,a_1,N}={\rm Gr}_{a_0+1}^+(\R^{N+1})$ 
is not contained 
in the subspace $\R^N\oplus\{0\}\subset \R^{N+1}$. 
We put $P_0:=P\cap (\R^N\oplus\{0\})$ and 
let $P_0^{\perp}$ be the orthogonal complement of $P_0$ in $\R^{N+1}$.  
Since $P$ is not contained in $\R^N$, 
the orthogonal projection $\R^{N+1}\to 
\{0\}\oplus\R\subset\R^{N+1}$ gives an 
isomorphism between $P\cap P_0^{\perp}$ and $\{0\}\oplus \R$. 
Define $v_0\in P_0^{\perp}\cap\R^N$ so that $v_0\oplus 1$ gives 
an oriented basis of the line $P\cap P_0^{\perp}$. 
By definition $(P_0,v_0)$ is an element in $\gamma_{a_0}^{\perp}$. 
If $P^\perp$ is not contained in $\R^N$, 
then we can define 
$(P_1,v_1)\in \gamma_{a_1}^{\perp}$ as in the above way 
for $P^{\perp}$. 
The map $P\mapsto (P_i,v_i)$ is compatible with the identification $\sim$ and induces the inverse map.  
%
\end{proof}

Hereafter we put $a_0=1$, $a_1=n-1$, $N=n+1$ and $X_{1,n-1,n+1}=X$. 
In this case we have 
${\rm Gr}_{a_0}^+(\R^N)=S^n$, $\gamma_{a_0}^{\perp}=TS^n=V_0$ and 
${\rm Gr}_{a_1}^+(\R^{N})={\rm Gr}_{n-1}^+(\R^{n+1})={\rm Gr}_{2}^+(\R^{n+1})$. Let $\gamma_2$ be the tautological plane bundle over ${\rm Gr}_{2}^+(\R^{n+1}). $
In this setting the identification $\sim$ can be described as follows: 
Let $r$ be a positive real number. 
For $(x,v)\in S_r(TS^n)$ we define $(P_1,v_1)\in S_{1/r}(\gamma_2)$ by 
$P_1^\perp:=\R\langle x,v\rangle$ and $v_1:=-v/r^2$. 
For $(P_1, v_1)\in S_{1/r}(\gamma_2)$ we put $v:=r^2v_1$
and define $x\in S^n$ so that $x$ is a tangent vector 
of the circle $P_1^\perp\cap S^n$ at $\frac{v}{||v||}$ and 
$\left\{x,\frac{v}{||v||}\right\}$ is an 
oriented orthonormal basis of $P_1^\perp$. 
These two maps give a diffeomorphism between 
$S_r(TS^n)$ and $S_{1/r}(\gamma_2)$ which induces the identification 
$$
X=TS^n\cup \gamma_2/\sim. 
$$
Under the above diffeomorphism $S_r(TS^n)\cong S_{1/r}(\gamma_2)$ 
the circle of radius $r$ in $\R\<x,v\>$ 
corresponds to the circle of radius $1/r$ in 
$P_1^\perp\in {\rm Gr}_2^+(\R^{n+1})$, and the $S^1$-action induced by 
the normalized geodesic flow is the principal $S^1$-action on $S_{1/r}(\gamma_2)$. 
It implies ${\rm Gr}_{2}^+(\R^{n+1})=h_{01}^{-1}(r)/S^1$ for all $r>0$. 
Moreover we have the following. 
\begin{lemma}\label{X_r=X}
$X_r$ is diffeomorphic to $X$ for any $r>0$. 
\end{lemma}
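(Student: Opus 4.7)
The plan is to construct a diffeomorphism $\Phi : X_r \to X$ by realising both spaces as two-piece smooth gluings centred on the common codimension-2 submanifold $G := {\rm Gr}_2^+(\R^{n+1})$.

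I would first identify the two normal bundles. The normal bundle $\nu_{(r)}$ of $M_{(r)}$ in $X_r$ is $h_{01}^{-1}(r)\times_{S^1}\C_1$ by Lemma~4.2, while the normal bundle of the zero section of $\gamma_2$ in $X$ is $\gamma_2$ itself. The $S^1$-equivariant map $(x,v)\mapsto(\R\langle x,v\rangle,x)$ is a diffeomorphism $h_{01}^{-1}(r)\cong S_1(\gamma_2)$ of principal $S^1$-bundles over $G$, and passing to associated $\C_1$-bundles gives $\nu_{(r)}\cong\gamma_2$ as complex line bundles.

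Next I would produce a common two-piece decomposition. Lemma~4.2 gives, for any $\delta>0$,
$$X_r \;=\; D_{r/(1+\delta^2)}(TS^n)\cup_{\psi_r} D_\delta(\nu_{(r)}),$$
glued along $S_{r/(1+\delta^2)}(TS^n)$, while the previous lemma's description $X=TS^n\cup\gamma_2/\sim$ yields
$$X \;=\; D_{r/(1+\delta^2)}(TS^n)\cup_{\psi_X} D_{(1+\delta^2)/r}(\gamma_2),$$
with $\psi_X$ induced by the inversion $v\mapsto -v/\|v\|^2$. Using the isomorphism $\nu_{(r)}\cong\gamma_2$ together with a fibrewise rescaling $D_\delta(\nu_{(r)})\cong D_{(1+\delta^2)/r}(\gamma_2)$, both presentations become gluings of the same two pieces along the same boundary sphere bundle by $S^1$-equivariant bundle isomorphisms $\psi_r,\psi_X$. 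Since $H^1(G;\Z)=0$ for every $n\ge 1$ (as $G$ is either two points, $S^2$, $S^2\times S^2$, or simply connected), the gauge group ${\rm Map}(G,S^1)$ is path-connected, so $\psi_r$ and $\psi_X$ are smoothly isotopic, hence the glued manifolds are diffeomorphic.

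The main technical obstacle I expect is reconciling the two non-linear formulae --- the tubular embedding $[x,v,z]\mapsto(x,v/(1+|z|^2))$ of Lemma~4.2 and the inversion $v\mapsto -v/\|v\|^2$ of the $\sim$ relation --- carefully enough to confirm that, after the line-bundle identification and appropriate rescaling, the two gluings really do reduce to $S^1$-equivariant bundle isomorphisms that can be compared by the isotopy argument above.
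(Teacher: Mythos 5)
Your proposal is sound and reaches the conclusion by a genuinely softer route than the paper. The paper's proof is fully explicit: it chooses a radial cutoff $\rho$ with $\rho\equiv 1$ near the zero section and $\rho(s)=1/\sqrt{s(r-s)}$ near $s=r$, so that the resulting stretch $D^o_r(TS^n)\to TS^n$ and an explicitly written bundle isomorphism $\nu_{(r)}\to\gamma_2$, namely $[x,v,z]\mapsto\bigl(\R\langle x,v\rangle,\,-(\mathrm{Im}(z)x+\mathrm{Re}(z)v/\norm{v})\bigr)$, literally agree on the overlap and glue to a global diffeomorphism. You instead keep the two pieces fixed, transport one gluing map to the other side via the normal-bundle identification, and dispose of the discrepancy by a gauge/isotopy argument using $H^1({\rm Gr}_2^+(\R^{n+1});\Z)=0$. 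Your two-piece decompositions are correct (the tubular embedding of Lemma~\ref{embednu} does carry $S_\delta(\nu_{(r)})$ onto $S_{r/(1+|z|^2)}(TS^n)$, and the $\sim$ relation does match $S_s(TS^n)$ with $S_{1/s}(\gamma_2)$), and both gluings do cover the identity of $G$ and are $S^1$-equivariant by the paper's Section~4.2. What your approach buys is robustness: no formulas need to be reconciled, only their equivariance. What it costs is exactly the verification you flag at the end, and I would sharpen one point there: the issue is not merely "reconciling nonlinear formulae" but checking that the two gluing maps are equivariant for the \emph{same} weight-one circle action after your identification $\nu_{(r)}\cong\gamma_2$. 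Note that the paper's explicit isomorphism above is complex \emph{anti}-linear for the tautological complex structure on $\gamma_2$, so a sign mismatch is a live possibility; if $\psi_X\circ\psi_r^{-1}$ turns out to be anti-equivariant it is not a gauge transformation and the connectedness of $\mathrm{Map}(G,S^1)$ does not apply as stated. The conclusion survives because fiberwise conjugation also extends over the disc bundle, but you should either verify the sign or add that one-line repair. Finally, for the use of this lemma in Section~4.5 the diffeomorphism must be the identity outside a small neighborhood of $M_{(r)}$; your construction delivers this automatically on $D_{r/(1+\delta^2)}(TS^n)$ minus a collar, so that requirement is met.
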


\begin{proof}
We first take and fix a positive number $\varepsilon>0$ small enough so that $\varepsilon<\frac{r}{2}$ and 
$(r-\varepsilon)\varepsilon<1$. 
Let $\rho:[0,r)\to \R$ be a smooth function such that 
$\rho(s)=1 \ (0\le s\le r-2\varepsilon)$, 
$\rho(s)=\frac{1}{\sqrt{s(r-s)}} \ (r-\varepsilon\le s <r)$ and $\rho$ is strictly 
increasing on $[r-2\varepsilon, r)$. 
We define a diffeomorphism $D_r^{o}(TS^n)\to TS^n$ by $(x,v)\mapsto (x,\rho(\norm{v})v)$. On the other hand there is a diffeomorphism between 
$\nu_{(r)}$ and $\gamma_2$ defined by 
$$
\nu_{(r)}\ni [ x,v,z] \mapsto 
\left(\R\langle x,v\rangle, -\left({\rm Im}(z)x+{\rm Re}(z)\frac{v}{\norm{v}}\right) \right)\in \gamma_2. 
$$
When we use the embedding of $\nu_{(r)}$ in Lemma~\ref{embednu} and 
descriptions $X_{r}=D_r^o(TS^n)\cup 
D_{r_{\varepsilon}}^o(\nu_{(r)})$ and $X=TS^n\cup 
D_{r_{\varepsilon}}^o(\gamma_2)$ for 
$r_{\varepsilon}=\sqrt{\frac{\varepsilon}{r-\varepsilon}}$,  
two maps defined above give a diffeomorphism $X_r\cong X$. 
\end{proof}

\begin{remark}\label{normalbundle}
By the above description the normal bundle of 
${\rm Gr}_{2}^+(\R^{n+1})$ (resp.~$S^n$) in $X=X_r$ is isomorphic to 
$\gamma_2$ (resp.~$TS^n$). 
\end{remark}

\subsection{Quadratic hypersurface} 

Let $Q_n$ be the quadratic hypersurface in 
$\CP^{n+1}$ defined by 
$$
Q_n:=\left\{[z_0:z_1:\cdots:z_{n+1}] \ \Bigm| \ \sum_{i=0}^{n+1}z_i^2=0\right\}. 
$$


\begin{lemma}\label{X_r=Q_n}
$X_r$ is diffeomorphic to $Q_n$ for any $r>0$. 
\end{lemma}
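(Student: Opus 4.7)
The plan is to combine Lemma~\ref{X_r=X} with the classical identification of the oriented real Grassmannian of $2$-planes with a complex quadric. By Lemma~\ref{X_r=X} it suffices to show that $X = X_{1,n-1,n+1} = {\rm Gr}_2^+(\R^{n+2})$ is diffeomorphic to $Q_n$, so the problem reduces to exhibiting a diffeomorphism
$$
\Phi \colon {\rm Gr}_2^+(\R^{n+2}) \longrightarrow Q_n \subset \CP^{n+1}.
$$

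The map I would use is the standard one: given an oriented $2$-plane $P$ with an oriented orthonormal basis $(e_1, e_2)$, set
$$
\Phi(P) := [e_1 + \sqrt{-1}\, e_2] \in \CP^{n+1},
$$
regarding $e_1, e_2 \in \R^{n+2} \subset \C^{n+2}$. The image lies in $Q_n$ since
$$
(e_1 + \sqrt{-1}\, e_2)\cdot (e_1 + \sqrt{-1}\, e_2) = \norm{e_1}^2 - \norm{e_2}^2 + 2\sqrt{-1}\, (e_1 \cdot e_2) = 0.
$$
For well-definedness, any other oriented orthonormal basis is obtained from $(e_1, e_2)$ by an element of $SO(2)$, which acts on $e_1 + \sqrt{-1}\, e_2$ by multiplication by a unit complex scalar $e^{\sqrt{-1}\theta}$; this does not change the projective class.

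The inverse is constructed as follows. Given $[z] \in Q_n$, write $z = x + \sqrt{-1}\, y$ with $x, y \in \R^{n+2}$. The condition $z \cdot z = 0$ is equivalent to $\norm{x} = \norm{y}$ and $x \cdot y = 0$; moreover $z \cdot \bar z = \norm{x}^2 + \norm{y}^2 > 0$ forces both $x$ and $y$ to be nonzero. Hence $(x, y)$ spans a well-defined oriented $2$-plane $P \subset \R^{n+2}$, and replacing $z$ by $e^{\sqrt{-1}\theta} z$ rotates $(x,y)$ by an element of $SO(2)$, leaving $P$ and its orientation unchanged. This gives a two-sided inverse to $\Phi$.

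Finally, both $\Phi$ and its inverse are manifestly smooth in local orthonormal frames of the tautological bundle and in affine charts of $\CP^{n+1}$, so $\Phi$ is a diffeomorphism. I do not expect a real obstacle here: the only slightly subtle point is verifying that $x$ and $y$ cannot vanish on $Q_n$, which follows from $z \neq 0$ together with $z \cdot z = 0$; everything else is bookkeeping about how $SO(2)$-reparametrizations of the oriented basis correspond to $U(1)$-rescalings of the homogeneous coordinate.
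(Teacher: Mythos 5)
Your proof is correct and follows exactly the paper's route: reduce via Lemma~\ref{X_r=X} to $X={\rm Gr}_2^+(\R^{n+2})$ and use the classical map $P\mapsto[u_0+\sqrt{-1}\,u_1]$ onto $Q_n$; you simply spell out the well-definedness and the inverse that the paper leaves implicit. No issues.
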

\begin{proof}
From Lemma~\ref{X_r=X} we have the diffeomorphism $X_r\cong {\rm Gr}_{2}^+(\R^{n+2})$. 
Let $P$ be an oriented 2-plane in $\R^{n+2}$ with an oriented 
orthonormal 2-frame $\{u_0,u_1\}$. 
The point $[u_0+\sqrt{-1}u_1]$ in $\CP^{n+1}$ is well-defined and 
contained in $Q_n$. 
The map $X_r\to Q_n$ gives the required diffeomorphism. 
\end{proof}

\begin{remark}\label{normalbundle2}
Under the above identification the divisor $Q_{n-1}\subset Q_n$ defined by 
$z_{n+1}=0$ corresponds to the submanifold 
${\rm Gr}_{2}^+(\R^{n+1})\subset
{\rm Gr}_2^+(\R^{n+2})$. 
The normal bundle of $Q_{n-1}$ in $Q_n$ is 
the tautological bundle ${\mathcal O}(1)|_{Q_{n-1}}$, 
which is isomorphic to the tautological bundle 
$\gamma_2\to {\rm Gr}_2^+(\R^{n+1})$ under the identification 
$Q_{n-1}={\rm Gr}_{2}^+(\R^{n+1})$. 
\end{remark}

\subsection{Comparison of almost complex structures}
\subsubsection{$J_0$, $J_1$ and $J_2$}
We first define three almost complex structures $J_0$, $J_1$ and $J_2$. 

\medskip

\paragraph{\underline{(0) $J_0$}}
Let $J_0$ be the standard almost complex structure on $V_0=T^*S^n=TS^n$ 
determined by 
the Riemannian metric and the Levi-Civita connection of $S^n$. 
Let $p:TS^n\to S^n$ be the projection and $\iota_{0}:p^*TS^n\to T(TS^n)$ be 
the splitting of the bundle map $T(TS^n)\to p^*TS^n$ determined by the 
Levi-Civita connection. 
Let $\iota_1:p^*TS^n\to T(TS^n)$ be the natural embedding of $p^*TS^n$ 
into the tangent bundle along fibers of $p:TS^n\to S^n$.  
We have a decomposition $T(TS^n)\cong p^*TS^n\oplus p^*TS^n$ 
via $\iota_0$ and $\iota_1$. The standard almost complex structure $J_0$ is 
characterized by the condition $J_0\iota_0=\iota_1$. 
Note that $J_0$ is invariant under the $S^1$-action of 
the normalized geodesic flow on $V_{01}$. 

Now we give more explicit description of $J_0$ 
using the natural embedding $TS^n\subset S^n\times \R^{n+1}$. 
We fix a non-zero tangent vector $(x,v)\in TS^n$.
Under this embedding one has 
$$
T_{(x,v)}(TS^n)=\{(u_1,u_2)\in \R^{n+1}\times \R^{n+1} \ | \ 
u_1\cdot x=u_2\cdot x+u_1\cdot v=0\}.  
$$
We give the description of the horizontal lift $T_xS^n\to T_{(x,v)}(TS^n)$ 
using the above description. 
For each $w\in T_xS^n$ the subspace $\R\< x,v,w\>$ generated by $x$,$v$ and $w$ 
is a 2-plane if $w$ is parallel to $v$ or a 3-space otherwise. 
Since the unit sphere in $\R\<x,v,w\>$ is totally geodesic in $S^n$, 
it suffices to consider the case $n=2$. 
We fix an oriented orthonormal frame $\{e_1, e_2, e_3\}$ of $\R^3$ 
so that $x=e_1$, $v=\norm{v}e_2$. 
Note that $e_2$ and $e_3$ form an oriented orthonormal basis of $T_xS^2$.   

\begin{lemma}\label{horlift}
The horizontal lifts of $e_2, e_3\in T_xS^2$ are given by 
$(e_2, -\norm{v}e_1), (e_3, 0)\in T_xS^2\times \R^3$ respectively. 
\end{lemma}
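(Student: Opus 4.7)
The plan is to compute the horizontal lifts by direct use of the definition: for $w\in T_xS^2$, the horizontal lift $\iota_0(w)\in T_{(x,v)}(TS^2)$ is $\frac{d}{dt}\big|_{t=0}(\gamma(t),V(t))$, where $\gamma$ is any curve in $S^2$ with $\gamma(0)=x$ and $\dot\gamma(0)=w$, and $V(t)$ is the parallel transport of $v$ along $\gamma$ with respect to the Levi-Civita connection. Since the answer is asked only for two specific vectors $e_2,e_3\in T_xS^2$, one can simply take $\gamma$ to be the appropriate great circle (which is the geodesic in that direction) and compute parallel transport explicitly in $\R^3$, using the fact that for $S^2\subset\R^3$ the Levi-Civita connection is obtained by orthogonally projecting the flat connection on $\R^3$ onto $TS^2$.

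First I would handle the case $w=e_3$. Let $\gamma(t)=\cos(t)\,e_1+\sin(t)\,e_3$, so $\gamma(0)=x$ and $\dot\gamma(0)=e_3$. The vector $v=\norm{v}e_2$ is orthogonal to the plane $\R\langle e_1,e_3\rangle$ in which $\gamma$ lies, so the constant vector field $V(t):=\norm{v}e_2$ is everywhere tangent to $S^2$ along $\gamma$ and has zero derivative in $\R^3$; it is therefore the parallel transport of $v$. Differentiating at $t=0$ gives $\iota_0(e_3)=(e_3,0)$, as claimed.

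Next I would treat $w=e_2$. Take $\gamma(t)=\cos(t)\,e_1+\sin(t)\,e_2$, so $\gamma(0)=x$ and $\dot\gamma(0)=e_2$. Now $v$ is tangent to $\gamma$ at $t=0$; since parallel transport preserves length and the angle with the velocity along a geodesic, $V(t)$ must be $\norm{v}$ times the unit tangent to $\gamma$, i.e.\ $V(t)=\norm{v}(-\sin(t)\,e_1+\cos(t)\,e_2)$. One checks that $\tfrac{d}{dt}V(t)$ is parallel to $\gamma(t)$, confirming that its tangential component vanishes, so $V$ is indeed parallel. Differentiating at $t=0$ yields $\iota_0(e_2)=(e_2,-\norm{v}e_1)$.

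There is essentially no obstacle here: the whole argument is a direct computation with two concrete great circles on $S^2$. The only thing worth confirming is internal consistency with the description of $T_{(x,v)}(TS^2)$ given just before the lemma, namely that both pairs $(e_3,0)$ and $(e_2,-\norm{v}e_1)$ satisfy the linear conditions $u_1\cdot x=0$ and $u_2\cdot x+u_1\cdot v=0$; this is immediate from $x=e_1$ and $v=\norm{v}e_2$.
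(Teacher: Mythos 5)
Your computation is correct: both parallel-transport calculations check out (for $e_3$ the constant field $\norm{v}e_2$ stays tangent along the great circle through $e_1,e_3$, and for $e_2$ the field $\norm{v}\dot\gamma(t)$ has derivative $-\norm{v}\gamma(t)$, which is purely normal), and the final consistency check against the linear description of $T_{(x,v)}(TS^2)$ is right. The paper states this lemma without proof, and your argument is exactly the standard verification it implicitly relies on.
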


The natural embedding $T_xS^2\to T_{(x,v)}(TS^2)$ 
of the fiber direction is given by 
$T_xS^2\ni w\mapsto (0,w)\in T_xS^2\times \R^{3}$. 
By definition of $J_0$ we have the following characterization. 

\begin{lemma}\label{J_0}
$J_0$ is characterized by the condition 
$$
J_0 : (e_2, -\norm{v}e_1)\mapsto (0, e_2), \ (e_3, 0)\mapsto (0, e_3). 
$$
\end{lemma}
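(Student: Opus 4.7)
The plan is to deduce this characterization of $J_0$ directly from its defining equation $J_0\iota_0=\iota_1$ together with the immediately preceding Lemma~\ref{horlift}. First, I would make the vertical embedding $\iota_1$ explicit in the ambient description $T_{(x,v)}(TS^2)\subset T_xS^2\times\R^3$. Since $\iota_1$ sends $w\in T_xS^n$ to the corresponding tangent vector along the fiber of $p:TS^n\to S^n$ over $x$, and the fiber $T_xS^n$ is itself a vector space (so its tangent space at $v$ is canonically identified with $T_xS^n$), the embedding simply sends $w$ to $(0,w)\in T_xS^2\times\R^3$.

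Second, I would plug in the explicit formulas from Lemma~\ref{horlift}, which identify the horizontal lifts as $\iota_0(e_2)=(e_2,-\norm{v}e_1)$ and $\iota_0(e_3)=(e_3,0)$. Combining these with $J_0\iota_0=\iota_1$ yields the claimed equalities
\[
J_0(e_2,-\norm{v}e_1)=(0,e_2), \qquad J_0(e_3,0)=(0,e_3).
\]

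Third, I would check that these two conditions actually characterize $J_0$ on the whole four-dimensional space $T_{(x,v)}(TS^2)$. The four vectors $(e_2,-\norm{v}e_1)$, $(e_3,0)$, $(0,e_2)$, $(0,e_3)$ form a basis: the first pair spans the horizontal subspace (image of $\iota_0$) while the second pair spans the vertical subspace (image of $\iota_1$), and $T(TS^2)\cong p^{*}TS^2\oplus p^{*}TS^2$ is a direct sum decomposition. Since $J_0$ exchanges the two summands and satisfies $J_0^2=-\id$, its action on the remaining two basis vectors is forced, so the two displayed identities determine $J_0$ entirely.

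There is essentially no serious obstacle in this step: all of the genuine geometric content has already been isolated in Lemma~\ref{horlift}, whose proof requires an explicit computation of parallel transport along a great-circle geodesic of $S^2\subset\R^3$. The present lemma is then a bookkeeping restatement in the adapted frame $\{e_1,e_2,e_3\}$ at $(x,v)$. The only mild care needed is with sign conventions, so that the pairing $(e_2,-\norm{v}e_1)\leftrightarrow(0,e_2)$ (rather than $(0,-e_2)$) is consistent with the standard orientation on $TS^2$ and with the compatibility of $J_0$ with $d\alpha$.
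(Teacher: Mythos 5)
Your argument is correct and is essentially the paper's: the text defines $\iota_1$ as $w\mapsto(0,w)$, quotes the horizontal lifts from Lemma~\ref{horlift}, and then states the lemma as an immediate consequence of the defining relation $J_0\iota_0=\iota_1$. Your additional observation that $J_0^2=-\id$ forces the action on the vertical vectors, so that the two displayed identities indeed characterize $J_0$, is a correct (and welcome) elaboration of what the paper leaves implicit.
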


\vspace{0.5cm}

\paragraph{\underline{(1) $J_1$}}
We define an almost complex structure $J_1$ on 
$X={\rm Gr}_2^+(\R^{n+2})$ as follows. 
For each oriented 2-plane $P\in X$, 
its orientation and Euclidean metric determine 
a canonical complex structure 
$J_P$ on $P$. We define an almost complex structure 
$(J_1)_P$ on $T_PX={\rm Hom}(P,P^{\perp})$  by 
$$
(J_1)_P(f):=f\circ J_P^{-1}. 
$$

\vspace{0.5cm}

\paragraph{\underline{(2) $J_2$}}
Let $J_2$ be the standard complex structure on $Q_n$ 
as a projective variety in $\CP^{n+1}$.

\subsubsection{$J_0\sim J_1=J_2$ on $TS^n$}
Let $V_1$ be the complement of $S^n$ in $X={\rm Gr}^+_2(\R^{n+2})$. 
We give two types of decompositions of $TX|_{V_0\cap V_1}$. 

For $P\in V_1\subset {\rm Gr}_2^+(\R^{n+2})$ let $P_2$ be the image of $P$ 
under the natural projection $\R^{n+2}\to \R^{n+1}$, 
which is a 2-plane in $\R^{n+1}$. 
Let $P^{\perp}$ be the orthogonal complement of $P$ in $\R^{n+2}$ and 
$\check{P_2}$ the orthogonal complement of $P_2$ in $\R^{n+1}$. 
Note that  $\check{P_2}=\check{P_2}\oplus \{0\}$ is a codimension 1 subspace of $P^{\perp}$. 
Let $F_P$ be the orthogonal complement of $\check{P_2}$ in $P^{\perp}$. 
We put $E_P:={\rm Hom}(P,\check{P_2})$ and $L_P:={\rm Hom}(P, F_P)$. 
In this way we have the first decomposition 
$TX|_{V_1}=E\oplus L$. 
Note that $E$ and $L$ are $J_1$-invariant subbundles. 

For $(x,v)\in TS^n$ consider the subspace $\R\<x,v\>$ in $\R^{n+1}$. 
Let $E'_{(x,v)}$ be the subspace of $T_{(x,v)}(TS^n)$ 
defined by $E'_{(x,v)}:=\R\<x,v\>^{\perp}\times \R\<x,v\>^{\perp}$ in $\R^{n+1}\times \R^{n+1}$. 
Let $L'_{(x,v)}$ be the subspace of $T_{(x,v)}(TS^n)$ 
defined by $L'_{(x,v)}=(\R\<x,v\>\times \R\<x,v\>)\cap T_{(x,v)}(TS^n)$. 
From Lemma~\ref{J_0} note that if $v=0$, then $L_{(x,0)}'=\{0\}$ and 
if $v\neq 0$, then $L'_{(x,v)}$ has a natural basis 
$\left\{\left(\frac{v}{\norm{v}},-\norm{v}x\right), \left(0, \frac{v}{\norm{v}}\right)\right\}$. 
In this way we have the second decomposition 
$TX|_{V_0\cap V_1}=E'\oplus L'$. 
Note that $E'$ and $L'$ are $J_0$-invariant subbundles. 

\begin{lemma}
Under the identification 
$TS^n\setminus S^n=V_0\cap V_1={\rm Gr}_2^+(\R^{n+2})\setminus 
(S^n\cup {\rm Gr}_2^+(\R^{n+1}))$, 
the subbundle $E$ (resp. $L$) is identified with $E'$ (resp. $L'$). 
\end{lemma}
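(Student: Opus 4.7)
The plan is to use the explicit description of the identification $\Phi\colon TS^n \setminus S^n \to V_0 \cap V_1$ which, by the proof of the Grassmannian identification, sends $(x,v)$ with $v\neq 0$ to the oriented 2-plane $P:=\R(x\oplus 0) + \R(v\oplus 1) \subset \R^{n+1}\oplus\R = \R^{n+2}$, and reduce the lemma to a direct calculation of $d\Phi$. First I read off the ingredients of the $E\oplus L$ decomposition at such a $P$: projecting $P$ to $\R^{n+1}$ gives $P_2=\R\<x,v\>$, hence $\check{P_2}=\R\<x,v\>^{\perp}$ in $\R^{n+1}$. A short computation identifies $P^{\perp}\subset\R^{n+2}$ with $\{(u,s) : u\cdot x = 0,\ u\cdot v + s = 0\}$, and shows that the line $F_P$ orthogonal to $\check{P_2}\oplus\{0\}$ in $P^{\perp}$ is spanned by $(v/\norm{v},\,-\norm{v})$.

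Next I compute $d\Phi$ through the standard model $T_PX\cong {\rm Hom}(P,P^{\perp})$: for a smooth family $P(t)$ of 2-planes with a moving frame $p_i(t)\in P(t)$, the associated Grassmannian tangent vector sends $p_i(0)$ to the $P^{\perp}$-component of $p_i'(0)$. Taking the frame $p_1(t)=(x(t),0)$, $p_2(t)=(v(t),1)$ along a curve with $(x'(0),v'(0))=(u_1,u_2)\in T_{(x,v)}(TS^n)$, one obtains the derivatives $(u_1,0)$ and $(u_2,0)$ in $\R^{n+2}$, which must then be projected to $P^{\perp}$ along $P$ using the defining constraints $u_1\cdot x = 0$ and $u_2\cdot x + u_1\cdot v = 0$.

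The verification then splits cleanly. For $(u_1,u_2)\in E'_{(x,v)}$ both components lie in $\R\<x,v\>^{\perp}$, so every inner product with $x$ or $v$ vanishes, the $P^{\perp}$-projections reduce to $(u_1,0)$ and $(u_2,0)$, and both lie in $\check{P_2}\oplus\{0\}$; thus $d\Phi(E')\subseteq E$. For the basis $\{(v/\norm{v},-\norm{v}x),\,(0,v/\norm{v})\}$ of $L'_{(x,v)}$ the analogous projection, after grouping terms, produces in each case a positive scalar multiple of $(v/\norm{v},-\norm{v})$, i.e.\ a vector in $F_P$; hence $d\Phi(L')\subseteq L$. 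Since $d\Phi$ is an isomorphism and the real dimensions match ($\dim E' = \dim E = 2(n-1)$ and $\dim L' = \dim L = 2$), both inclusions are equalities and the lemma follows.

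The whole argument is essentially elementary linear algebra once the identification $\Phi$ is unpacked, so I do not expect a real obstacle. The only point that needs any attention is keeping the constraint $u_2\cdot x + u_1\cdot v = 0$ in view when projecting to $P^{\perp}$; this constraint is automatic for vectors in $E'$ and holds by direct inspection for the chosen basis of $L'$.
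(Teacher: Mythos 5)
Your proof is correct and follows essentially the same route as the paper's: both identify $T_P{\rm Gr}_2^+(\R^{n+2})$ with $\operatorname{Hom}(P,P^{\perp})$, compute the differential of $(x,v)\mapsto \R\<x\oplus 0, v\oplus 1\>$ by projecting frame derivatives to $P^{\perp}$, and check that $E'$ lands in $\operatorname{Hom}(P,\check{P_2})$ while the basis of $L'$ lands in $\operatorname{Hom}(P,F_P)$. The only cosmetic difference is that the paper reduces to $n=2$ with an explicit orthonormal frame and writes out the four resulting homomorphisms, whereas you work in general $n$ and close the argument with a dimension count; your computations (e.g.\ $F_P$ spanned by $(v/\norm{v},-\norm{v})$ and the $\tfrac{1}{1+\norm{v}^2}$ projection coefficients) match the paper's.
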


\begin{proof}
It suffices to prove for the case $n=2$.  
For $(x,v)\in TS^n\setminus S^n$ 
we use an oriented orthonormal frame $\{e_1, e_2,e_3\}$ of $\R^3$ as in 
Lemma~\ref{horlift} and Lemma~\ref{J_0}. 
In this case we have $E'_{(x,v)}=\R\<(e_3,0), (0,e_3)\>$ 
and $L'_{(x,v)}=\R\<(e_2, -\norm{v}e_1),(0, e_2)\>$.  
We put $P=\R\<x\oplus 0, v\oplus 1\>\in {\rm Gr}_2^+(\R^{n+2})$, and then 
we have a natural basis $e_2\oplus (-\norm{v})$ of $F_P$. 
Under the identification $T_{(x,v)}(TS^n)=T_P{\rm Gr}_2^+(\R^{n+2})$ 
one can check the following correspondence by direct computations. 
$$
(e_3,0)\mapsto f_{(e_3, 0)}, \ (0, e_3)\mapsto f_{(0, e_3)}, \ 
(e_2, -\norm{v}e_1)\mapsto f_{(e_2, -\norm{v}e_1)}, \ 
(0,e_2)\mapsto f_{(0,e_2)}, 
$$where linear maps $f_{(e_3,0)}\in T_P{\rm Gr}_2^+(\R^{n+2})$ etc. 
are defined by 
$$
f_{(e_3,0)}: 
\left\{\begin{array}{ll} 
x\oplus 0 \mapsto e_3\oplus 0 \\ 
v\oplus 1\mapsto 0,  
\end{array}\right.
f_{(0, e_3)}: 
\left\{\begin{array}{ll} 
x\oplus 0 \mapsto 0  \\ 
v\oplus 1\mapsto  e_3\oplus 0, 
\end{array}\right. 
$$
$$
f_{(e_2, -\norm{v}e_1)}: 
\left\{\begin{array}{ll} 
x\oplus 0 \mapsto \frac{1}{1+\norm{v}^2}(e_2\oplus(-\norm{v})) \\ 
v\oplus 1\mapsto 0,  
\end{array}\right.
f_{(0, e_2)}: 
\left\{\begin{array}{ll} 
x\oplus 0 \mapsto 0  \\ 
v\oplus 1\mapsto  \frac{1}{1+\norm{v}^2}(e_2\oplus(-\norm{v})). 
\end{array}\right.
$$
It implies that $E'_{(x,v)}$ (resp. $L'_{(x,v)}$) is 
isomorphic to $E_P$ (resp. $L_P$). 
\end{proof}

\begin{remark}
Though $E$ (resp. $L$) and $E'$ (resp. $L'$) 
are isomorphic they are not isometric with respect to 
standard metrics of ${\rm Gr}_2^+(\R^{n+2})$ and $S^n\times \R^{n+1}$.   
\end{remark}

Using the above basis of $T_P{\rm Gr}_2^+(\R^{n+2})$ 
the almost complex structure $J_1$ is characterized as follows. 
\begin{lemma}\label{characterization of J_1}
$J_1(f_{(e_3,0)})=\sqrt{1+\norm{v}^2}f_{(0, e_3)}$,  
$J_1(f_{(e_2, -\norm{v}e_1)})=\sqrt{1+\norm{v}^2}f_{(0, e_2)}$. 
\end{lemma}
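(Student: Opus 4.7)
The plan is to compute $J_1$ directly from its definition $(J_1)_P(f)=f\circ J_P^{-1}$, which requires first writing down $J_P^{-1}$ on the (non-orthonormal) basis $\{x\oplus 0,\,v\oplus 1\}$ of $P=\R\<x\oplus 0,\,v\oplus 1\>\in {\rm Gr}_2^+(\R^{n+2})$. Since $x\cdot v=0$, these two vectors of $\R^{n+2}$ are orthogonal with norms $1$ and $\sqrt{1+\norm{v}^2}$ respectively, so an oriented orthonormal basis of $P$ (with the orientation inherited from the identification $(x,v)\mapsto P$ used in the previous subsections) is $\{x\oplus 0,\,(v\oplus 1)/\sqrt{1+\norm{v}^2}\}$. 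The canonical complex structure $J_P$ therefore sends $x\oplus 0$ to $(v\oplus 1)/\sqrt{1+\norm{v}^2}$ and $(v\oplus 1)/\sqrt{1+\norm{v}^2}$ to $-(x\oplus 0)$. Rescaling and inverting,
$$
J_P^{-1}(x\oplus 0)=-\frac{1}{\sqrt{1+\norm{v}^2}}(v\oplus 1),\qquad
J_P^{-1}(v\oplus 1)=\sqrt{1+\norm{v}^2}(x\oplus 0).
$$

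Next I would evaluate $J_1(f)=f\circ J_P^{-1}$ on the basis $\{x\oplus 0,\,v\oplus 1\}$ of $P$ for $f=f_{(e_3,0)}$ and for $f=f_{(e_2,-\norm{v}e_1)}$, using the explicit formulas recalled just before the lemma. In both cases $J_1(f)$ annihilates $x\oplus 0$, since $J_P^{-1}(x\oplus 0)$ is proportional to $v\oplus 1$, which is killed by both $f_{(e_3,0)}$ and $f_{(e_2,-\norm{v}e_1)}$. On $v\oplus 1$ the scalar $\sqrt{1+\norm{v}^2}$ produced by $J_P^{-1}$ passes through $f$ linearly; in the second case it combines with the normalization $\frac{1}{1+\norm{v}^2}$ appearing in the definition of $f_{(e_2,-\norm{v}e_1)}$ to give $\frac{1}{\sqrt{1+\norm{v}^2}}(e_2\oplus(-\norm{v}))$. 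Matching these outputs with $\sqrt{1+\norm{v}^2}$ times the given formulas for $f_{(0,e_3)}$ and $f_{(0,e_2)}$ yields both identities.

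I do not anticipate any real obstacle: once $J_P^{-1}$ is written down, the lemma reduces to a two-dimensional matrix computation. The only point requiring care is the sign of $J_P$, which relies on fixing the orientation of $P$ consistently with the convention implicit in the earlier identification, namely that $(x,v)\in TS^n$ corresponds to $P$ with positively oriented basis $(x\oplus 0,\,v\oplus 1)$.
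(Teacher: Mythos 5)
Your computation is correct and is exactly the direct verification from the definition $(J_1)_P(f)=f\circ J_P^{-1}$ that the paper leaves implicit (the lemma is stated without proof there). Your orientation convention for $P$, hence the sign of $J_P$, is the right one: it is confirmed a posteriori by the paper's subsequent claim that $J_1$ is compatible with $d\alpha$ for the symplectic basis $\{(e_3,0),(0,e_3),(e_2,-\norm{v}e_1),(0,e_2)\}$, which forces the positive coefficient $\sqrt{1+\norm{v}^2}$.
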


Consider the standard symplectic structure $d\alpha$ on $V_0=TS^n$. 
It is easy to see that the frame 
$\{(e_3,0), \ (0, e_3), \ (e_2, -\norm{v}e_1), \ (0,e_2)\}$ 
form a symplectic basis of $T(TS^n)$. 
Lemma~\ref{characterization of J_1} shows that $J_1$ is 
compatible with $d\alpha$ and the following fact. 
\begin{prop}\label{J_0simJ_1}
Two almost complex structures $J_0$ and $J_1$ on $V_0$ are 
homotopic to each other in $d\alpha$-compatible almost complex structures.  
\end{prop}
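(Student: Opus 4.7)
The plan is to construct an explicit one-parameter family $\{J_t\}_{t\in[0,1]}$ of $d\alpha$-compatible almost complex structures on $V_0$ interpolating $J_0$ and $J_1$. The key observation is that, by Lemmas~\ref{J_0} and \ref{characterization of J_1}, on the symplectic frame $\{(e_3,0),\,(0,e_3),\,(e_2,-\norm{v}e_1),\,(0,e_2)\}$ of $T_{(x,v)}(TS^n)$ over $V_0\cap V_1$, both $J_0$ and $J_1$ preserve the $d\alpha$-orthogonal splitting $E'\oplus L'$ and differ only by the positive scalar $\lambda(v):=\sqrt{1+\norm{v}^2}$: $J_0$ sends the first vector of each pair to the second, while $J_1$ does the same up to the factor $\lambda(v)$.

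First I would set $\lambda_t(v):=(1-t)+t\lambda(v)$ and define $J_t$ on $E'\oplus L'$ by requiring $J_t(e_3,0)=\lambda_t(v)(0,e_3)$ and $J_t(e_2,-\norm{v}e_1)=\lambda_t(v)(0,e_2)$, together with $J_t^2=-\id$. A direct linear-algebra verification shows that $d\alpha(\cdot,J_t\cdot)$ is symmetric and positive definite on each symplectic $2$-plane in the frame, so every $J_t$ is $d\alpha$-compatible on $V_0\cap V_1$ and the family continuously connects $J_0$ at $t=0$ to $J_1$ at $t=1$.

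Second, I must extend the homotopy smoothly across the zero section $S^n\subset V_0$, where the subbundle $L'$ degenerates. At $v=0$ the scalar $\lambda(v)$ reduces to $1$, so it suffices to verify that $J_1|_{S^n}$ coincides with $J_0|_{S^n}$. This follows from a direct computation using the Grassmannian description of $J_1$: at a zero-section point $(x,0)$, the associated plane $P=\R\<x\oplus 0,\,0\oplus 1\>\subset\R^{n+2}$ carries a canonical oriented orthonormal frame whose $J_P$-action, via $(J_1)_P(f)=f\circ J_P^{-1}$, reproduces the horizontal-vertical swap defining $J_0$ on $T_{(x,0)}(TS^n)=T_xS^n\oplus T_xS^n$. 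Hence $J_t\equiv J_0$ on $S^n$ for every $t$, and the family extends smoothly to all of $V_0$.

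The main obstacle is precisely this smooth extension across the zero section, where the decomposition used to write down the interpolation degenerates and the explicit formulas must be reconciled with the Grassmannian description of $J_1$. A cleaner but less explicit alternative is to invoke the classical fact that the space of $d\alpha$-compatible almost complex structures on a symplectic manifold is contractible; combined with the $d\alpha$-compatibility of $J_1$ recorded just before the statement of the proposition and the standard $d\alpha$-compatibility of $J_0$ as the canonical almost K\"ahler structure on $T^*S^n$ induced by the Levi-Civita connection, this immediately yields the desired homotopy with no explicit formula at all.
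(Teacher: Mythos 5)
Your proof is correct and takes essentially the same route as the paper, which deduces the proposition directly from the symplectic-basis computation and Lemma~\ref{characterization of J_1}: both structures preserve the splitting $E'\oplus L'$ and differ only by the positive scalar $\sqrt{1+\norm{v}^2}$, which is deformed to $1$ through $d\alpha$-compatible structures. Your attention to the degeneration at the zero section (where all $J_t$ coincide with $J_0=J_1$, the interpolating tensor acting by $\lambda_t^{\pm1}$ on the smooth horizontal/vertical splitting) fills in a point the paper leaves implicit, and the contractibility of the space of $d\alpha$-compatible almost complex structures is indeed a valid shortcut once both compatibilities are known.
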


On the other hand one can check the following by definition of 
the diffeomorphism in the proof of Lemma~\ref{X_r=Q_n}. 
\begin{lemma}\label{J_1=J_2}
$J_1=J_2$ on ${\rm Gr}_2^+(\R^{n+2})=Q_n$. 
\end{lemma}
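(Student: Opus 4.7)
The plan is to show directly that the differential of the diffeomorphism $\Phi\colon X={\rm Gr}_2^+(\R^{n+2})\to Q_n$ constructed in Lemma~\ref{X_r=Q_n} intertwines $J_1$ and $J_2$. Since $Q_n$ is a complex submanifold of $\CP^{n+1}$, the restriction of the ambient complex structure computes $J_2$, so it suffices to check the intertwining inside $T\CP^{n+1}$.

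First I would fix $P\in X$ and an oriented orthonormal frame $\{u_0,u_1\}$ of $P$, and use the standard identification $T_PX={\rm Hom}(P,P^{\perp})$. To compute $d\Phi_P$, I would take a smooth path $P_t$ with $P_0=P$, lift it to a smooth frame $\{u_0(t),u_1(t)\}$, and note that the derivative at $t=0$ of $[u_0(t)+\sqrt{-1}u_1(t)]\in\CP^{n+1}$ equals the class of $\dot u_0(0)+\sqrt{-1}\dot u_1(0)$ in the quotient $(\R^{n+2}\otimes\C)/\C(u_0+\sqrt{-1}u_1)$, which is the standard model of $T_{[u_0+\sqrt{-1}u_1]}\CP^{n+1}$. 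Since the component of $\dot u_j(0)$ along $P$ only reflects an infinitesimal change of frame inside the fixed plane, it lies in $\C(u_0+\sqrt{-1}u_1)$ after combining the two terms, so in the quotient only the $P^{\perp}$-components survive. Under the correspondence between paths of planes and homomorphisms $f\in{\rm Hom}(P,P^{\perp})$, this gives
\[
d\Phi_P(f)=\bigl[f(u_0)+\sqrt{-1}f(u_1)\bigr]\in T_{[u_0+\sqrt{-1}u_1]}\CP^{n+1}.
\]

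Next I would run the two almost complex structures against this formula. On the one hand, $J_2$ acts by multiplication by $\sqrt{-1}$ on the quotient, so
\[
J_2\,d\Phi_P(f)=\bigl[-f(u_1)+\sqrt{-1}f(u_0)\bigr].
\]
On the other hand, the canonical complex structure $J_P$ on the oriented Euclidean 2-plane $P$ satisfies $J_Pu_0=u_1$ and $J_Pu_1=-u_0$, hence $J_P^{-1}u_0=-u_1$ and $J_P^{-1}u_1=u_0$. Therefore
\[
d\Phi_P(J_1f)=d\Phi_P(f\circ J_P^{-1})=\bigl[f(-u_1)+\sqrt{-1}f(u_0)\bigr],
\]
which agrees with the previous expression. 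Thus $d\Phi_P\circ J_1=J_2\circ d\Phi_P$, proving $J_1=J_2$.

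The computation is essentially routine once the identifications are set up, so the only delicate point is being careful with the tangent space models and the sign of $J_P^{-1}$; no further obstacle arises.
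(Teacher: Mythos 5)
Your computation is correct and is precisely the verification the paper leaves to the reader: the paper offers no written proof beyond ``one can check this by definition of the diffeomorphism in Lemma~\ref{X_r=Q_n}'', and your argument carries out exactly that check. The tangent-space model $d\Phi_P(f)=\bigl[f(u_0)+\sqrt{-1}f(u_1)\bigr]$, the vanishing of the $P$-components in the quotient, and the sign convention $J_Pu_0=u_1$ are all handled correctly, so nothing is missing.
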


\subsection{Localization and almost complex structures}
Hereafter we assume that $r$ is a positive integer $k$. 
Recall the diffeomorphism $X_k\cong X$ as in Lemma~\ref{X_r=X}. 
Since the diffeomorphism is equal to the identity map on the complement 
of the small $S^1$-invariant neighborhood $D_{r_{\varepsilon}}^o(\nu_{(k)})$ 
of $M_{(k)}$, 
Proposition~\ref{J_0simJ_1} 
guarantees that there exists an almost complex structure $J$ on $X_k$ 
which satisfies the following conditions. 
\begin{itemize}
\item $J$ is homotopic to $J_1$ on $X_k$. 
\item $J$ is $S^1$-invariant on $X_k\setminus D_{r_{\varepsilon}'}(\nu_{(k)})
\subset D_r^o(TS^n)$.
\item $J=J_0$ on $X_k\setminus D_{r_{\varepsilon}'}(\nu_{(k)})
=D_{r-2\varepsilon}^o(TS^n)$. 
\item $J=J_1$ on $D_{r_{\varepsilon}}^o(\nu_{(k)})$, 
\end{itemize}
where $r_\varepsilon=\sqrt{\frac{\varepsilon}{r-\varepsilon}}$ and $r_\varepsilon'=r_{2\varepsilon}=\sqrt{\frac{2\varepsilon}{r-2\varepsilon}}$. 

\begin{prop}\label{localization}
We have the following localization formula. 
$$
RR(X_k)=
RR_{\rm loc}(S^n)+
\sum_{l=1}^{k-1}RR_{\rm loc}(S_l(TS^n))+RR_{\rm loc}(Q_n). 
$$
\end{prop}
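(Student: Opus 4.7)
The plan is to apply deformation invariance, the closed-manifold identification, and the excision formula for the relative index, in sequence, to the prequantized closed almost complex manifold $(X_k, J, L_k)$ constructed in Subsection~4.5. Set
$$
K := S^n \,\sqcup\, \bigsqcup_{l=1}^{k-1} S_l(TS^n) \,\sqcup\, Q_n,
$$
where $Q_n$ is identified with $M_{(k)}$ via the symplectic cut, and put $V := X_k \setminus K$.

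First I would verify the four conditions of Section~2 for the data $(X_k, J, g, L_k)$ and the open set $V$. Compactness of $K$ is clear. The $S^1$-invariance of $(J,g)$ on $V$ is built into the construction: on $D_{r-2\varepsilon}^o(TS^n)\setminus (S^n\cup \bigcup_l S_l(TS^n))$ one has $J=J_0$, which is invariant under the normalized geodesic flow, while on the collar $D_{r_\varepsilon}^o(\nu_{(k)})\setminus Q_n$ one has $J=J_1$, invariant under the principal circle action on $\nu_{(k)}\setminus Q_n\to Q_n$. The action is free without fixed points on $V$. Condition (4) on nonvanishing of parallel sections is supplied by Lemma~\ref{BSorbits}: the holonomy around an orbit at radius $\norm{v}$ is $\exp(2\pi\sqrt{-1}\norm{v})$, and we have excised exactly the integer levels $\norm{v}=0,1,\dots,k-1$ together with the cut level $\norm{v}=k$ (which is $Q_n$); on the cut collar the same computation applies via the symplectic gluing $\varphi$ of Subsection~4.1.

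Next, since $X_k$ is closed, property~(1) of the relative index identifies $\ind(X_k,V)$ with the spin$^c$ Dirac index on $(X_k, J, L_k)$. By Proposition~\ref{J_0simJ_1} and Lemma~\ref{J_1=J_2}, $J$ is homotopic through $d\alpha$-compatible almost complex structures to $J_1=J_2$, the integrable K\"ahler structure on $Q_n\subset\CP^{n+1}$, so deformation invariance (property~(2)) gives $\ind(X_k,V)=RR(X_k)$. Finally, the components of $K$ are mutually disjoint closed subsets of $X_k$; taking pairwise disjoint $S^1$-invariant tubular neighborhoods and iterating the excision formula (property~(3)) splits the relative index into a sum of local contributions:
$$
RR(X_k) = RR_{\rm loc}(S^n) + \sum_{l=1}^{k-1} RR_{\rm loc}(S_l(TS^n)) + RR_{\rm loc}(Q_n).
$$

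The main obstacle is verifying condition~(4) along orbits in the cut collar near $Q_n$, where the prequantizing bundle $L_k$ is constructed by symplectic reduction from the trivial line bundle with connection $\alpha$ on $V_{01}\times\C$. One must track the induced connection through the quotient $\hat h_k^{-1}(0)/S^1$ and confirm that its holonomy along residual $S^1$-orbits in $\nu_{(k)}\setminus Q_n$ is nontrivial; this is the only place where some genuine bookkeeping with the symplectic cut is required, and it reduces, via $\varphi$, to the holonomy formula of Lemma~\ref{BSorbits} on the $V_0$ side for $\norm{v}$ approaching $k$ from below.
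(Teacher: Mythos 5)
Your argument is essentially the paper's own proof: verify the hypotheses of Section~2 for $V=X_k\setminus K$ via the listed properties of $J$ and Lemma~\ref{BSorbits} (this is exactly why the paper insists that no integer lie in $[k-2\varepsilon,k)$), identify $\ind(X_k,V)$ with $RR(X_k)$ by properties (1) and (2), and split off the local contributions by excision. The one point to phrase a little more carefully (the paper is equally terse here) is that $V$ also contains the interpolation annulus $k-2\varepsilon<\norm{v}<k-\varepsilon$ where $J$ is neither $J_0$ nor $J_1$, so one must either perform the interpolation $S^1$-invariantly with respect to the normalized geodesic flow or absorb that annulus into the excised neighborhood of $Q_n$ before applying excision.
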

\begin{proof}
By the homotopy invariance of the Riemann-Roch number, 
we may use $J$ to compute $RR(X_k)$. 
On the other hand when we take 
$\varepsilon>0$ small enough so that there are no integers in 
the interval $[k-2\varepsilon, k)$, we may use $J$ to define 
local Riemann-Roch numbers 
in the right hand side by Lemma~\ref{BSorbits} and the 
properties of $J$. 
The equality follows from 
the excision formula of local Riemann-Roch numbers. 
\end{proof}

\section{Local models and their local Riemann-Roch number}
\subsection{Local models}
Let $M_0$ be a closed symplectic manifold. 
We assume that there exists a prequantizing line bundle 
$(L_0,\nabla_0)$ over $M_0$. 
Let $p_0:P_0\to M_0$ be a principal $S^1$-bundle over $M_0$ 
and $\alpha_0$ a principal connection of $P_0$, 
which is a pure imaginary 1-form on $P_0$. 
We define two open prequantized symplectic manifolds 
$M_D$ and  $M_{BS}$. 

\subsubsection{$M_{D}$}
Let $\C_1$ be the one-dimensional representation of $S^1$ of weight 1. 
Let $M_{\C}=P_0\times_{S^1}\C_1$ be the quotient space by the diagonal 
$S^1$-action. 
For a complex coordinate $z=x+\sqrt{-1}y$ of $\C_1$ and $r:=|z|$, 
the pure imaginary 1-form on $P_0\times \C_1$ 
$$
\tilde\alpha_{\C}:=\frac{1}{2}r^2\alpha_0+\frac{\sqrt{-1}}{2}(ydx-xdy)
$$is basic with respect to the $S^1$-action. 
Let $\alpha_{\C}$ be the 1-form on $M_{\C}$ whose pull-back 
to $P_0\times\C_1$ is equal to $\tilde\alpha_{\C}$. 
Define a Hermitian line bundle with connection 
$(L_{\C},\nabla_{\C})$ by 
$$
(L_{\C},\nabla_{\C}):=(p_{\C}^*L_{0},p_{\C}^*\nabla_0+\alpha_{\C}), 
$$where $p_{\C}:M_{\C}\to M_0$ is the projection. 

\begin{lemma}\label{modelMD}
Fix $0<\varepsilon <\sqrt{2}$ and we put 
$M_D:=P_0\times_{S^1}D^o_{\varepsilon}(\C_1)$. 
\begin{enumerate}
\item If $\varepsilon$ is small enough, 
then the restriction of $(L_{\C},\nabla_{\C})$ to $M_{D}$ 
is a prequantizing line bundle for a suitable symplectic structure on $M_D$. 
\item The natural $S^1$-action on $M_D$ using the $P_0$-component 
is Hamiltonian with the moment map $[u,z]\mapsto |z|^2/2$. 
\item The holonomy of the $S^1$-orbit through $[u,z]\in M_D$ 
is trivial if and only if $z=0$. 
\end{enumerate}
\end{lemma}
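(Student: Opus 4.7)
The plan is to verify all three parts by direct computation, using the explicit formula for $\tilde\alpha_{\C}$ on $P_0\times\C_1$ together with its basic property (so exterior derivatives and interior products descend to $M_{\C}$). The key preliminary is
\[
d\tilde\alpha_{\C} = (x\,dx+y\,dy)\wedge\alpha_0 + \tfrac{r^2}{2}d\alpha_0 - \sqrt{-1}\,dx\wedge dy,
\]
from which the curvature of $\nabla_{\C}$ on $M_{\C}$ equals $p_{\C}^*F_{\nabla_0} + d\alpha_{\C}$.

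For part (1) I evaluate this curvature at the zero section $\{z=0\}$: the cross term and the $\tfrac{r^2}{2}d\alpha_0$ term vanish, leaving $-\sqrt{-1}$ times the direct sum of $\omega_0$ on the $M_0$-direction and $dx\wedge dy$ on the $\C_1$-fiber, under the decomposition $T_{[u,0]}M_{\C}\cong T_{p_0(u)}M_0\oplus\C_1$. This is non-degenerate along the zero section, and openness of non-degeneracy (plus compactness of $M_0$) promotes it to a tubular neighborhood; for $\varepsilon$ sufficiently small, $M_D$ lies inside this neighborhood, and $F_{\nabla_{\C}}|_{M_D}=-\sqrt{-1}\omega_{\C}$ gives the desired symplectic structure prequantized by $\nabla_{\C}$.

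For part (2) I lift the natural $S^1$-action to $P_0\times\C_1$ via the $P_0$-factor; its infinitesimal generator is then the fundamental vector field $V$ on $P_0$. Contracting $V$ into $d\tilde\alpha_{\C}$ kills the $\tfrac{r^2}{2}d\alpha_0$ term (since $d\alpha_0$ is horizontal) and the $-\sqrt{-1}\,dx\wedge dy$ term (since $V$ has no $\C$-component), leaving $\iota_V\bigl((x\,dx+y\,dy)\wedge\alpha_0\bigr)=-\sqrt{-1}\,d(r^2/2)$ via $\alpha_0(V)=\sqrt{-1}$; the pullback $p_{\C}^*F_{\nabla_0}$ contributes zero because $V$ is $p_0$-vertical. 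Descending to $M_{\C}$ gives $\iota_{\tilde X}\omega_{\C}=d(|z|^2/2)$, which is the Hamiltonian condition with moment map $[u,z]\mapsto|z|^2/2$. For part (3) I lift the orbit through $[u,z]$ to the closed loop $\theta\mapsto(ue^{i\theta},z)$ in $P_0\times\C_1$. Since this loop has constant $\C_1$-component and its projection to $M_0$ is the single point $p_0(u)$, both the $p_{\C}^*\nabla_0$ contribution and the $\frac{\sqrt{-1}}{2}(y\,dx-x\,dy)$ part of $\tilde\alpha_{\C}$ integrate to zero; only $\frac{r^2}{2}\alpha_0$ survives, yielding $\int_0^{2\pi}\frac{|z|^2}{2}\sqrt{-1}\,d\theta=\pi\sqrt{-1}|z|^2$ and holonomy $\exp(\pm\pi\sqrt{-1}|z|^2)$. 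This is trivial exactly when $|z|^2\in 2\Z$, and the hypothesis $\varepsilon<\sqrt{2}$ forces $|z|^2<2$, so only $|z|^2=0$ qualifies.

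The main obstacle is purely bookkeeping: one must fix consistent conventions for the weight-$1$ $S^1$-action on $\C_1$ (ensuring $\tilde\alpha_{\C}$ really is basic with the stated formula), the normalization $\alpha_0(V)=\sqrt{-1}$ on vertical vectors, and the prequantization sign $F=-\sqrt{-1}\omega$; once these are locked in, each of the three parts is essentially a one-line calculation. The only genuinely quantitative input is the matching of constants that makes the holonomy period in $|z|^2$ equal to $2$, which is precisely why the threshold $\sqrt{2}$ appears in the hypothesis of the lemma.
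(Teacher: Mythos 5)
Your argument is correct and follows essentially the same route as the paper's (much terser) proof: nondegeneracy of the curvature along the zero section plus openness for (1), evaluation of the generator against $\tilde\alpha_{\C}$ (equivalently, Cartan's formula applied to $d\tilde\alpha_{\C}$) for (2), and the resulting holonomy $\exp(\pm\pi\sqrt{-1}|z|^2)$ with the threshold $\varepsilon<\sqrt2$ for (3). You have merely filled in the computations the paper leaves implicit.
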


\begin{proof}
(1) The restriction of $(L_{\C},\nabla_{\C})$ to 
$M_0=P_0\times_{S^1}\{0\}$ is equal to $(L_0,\nabla_0)$, 
hence, the curvature 2-form of $\nabla_{\C}$ gives a 
symplectic structure for $\varepsilon>0$ small enough. 
By definition $(L_{\C}, \nabla_{\C})|_{M_D}$ is a prequantizing line bundle 
for this symplectic structure. 
(2) The $S^1$-action on the $P_0$-component preserves 
$\nabla_{\C}$, and the evaluation of the infinitesimal action 
by $\alpha_{\C}$ is equal to $|z|^2/2$. 
(3) follows from (2). 
\end{proof}

\subsubsection{$M_{BS}$}
Define a pure imaginary 1-form $\alpha_{BS}$ on $P_0\times \R$ by 
$$
\alpha_{BS}=r\alpha_0, 
$$where $r$ is the coordinate of $\R$. 
Define a Hermitian line bundles with connection 
over $P_0\times\R$ by 
$$
(L_{BS}, \nabla_{BS}):=
(p_0^*L_0,p^*_0\nabla_0+\alpha_{BS}). 
$$

By the similar argument as in the proof of Lemma~\ref{modelMD} 
we have the following. 
\begin{lemma}Fix $0<\varepsilon<1$ and we put 
$M_{BS}=P_0\times(-\varepsilon,\varepsilon)$. 
\begin{enumerate}
\item If $\varepsilon$ is small enough, 
then the restriction of $(L_{BS},\nabla_{BS})$ to 
$M_{BS}$ is a prequantizing line bundle 
for a suitable symplectic structure on $M_{BS}$. 
\item The natural $S^1$-action on the $P_0$-component 
is Hamiltonian with the moment map $(u,r)\mapsto r$. 
\item The holonomy of the $S^1$-orbit through $(u,r)\in M_{BS}$ 
is trivial if and only if $r=0$. 
\end{enumerate}
\end{lemma}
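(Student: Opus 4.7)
The plan is to mirror the proof of Lemma~\ref{modelMD} for $M_D$, with $r\alpha_0$ playing the role of $\tilde\alpha_{\C}$. I would work in the tautological trivialization of $p_0^*L_0$ (extended trivially in the $\R$-direction), in which $p_0^*\nabla_0$ is represented by the connection 1-form $\alpha_0$. In this trivialization the curvature of $\nabla_{BS}$ on $P_0\times\R$ equals
\[
F_{\nabla_{BS}} = d\alpha_0 + d\alpha_{BS} = (1+r)\,d\alpha_0 + dr\wedge\alpha_0.
\]
Now $d\alpha_0$ is basic, its descent to $M_0$ being proportional to $\omega_0$, so it is nondegenerate on the horizontal subspace transverse to the $S^1$-fiber, while $dr\wedge\alpha_0$ pairs $\partial/\partial r$ nondegenerately with the fundamental vector field $\xi^*$ through $\alpha_0(\xi^*)=\sqrt{-1}$. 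For $\varepsilon$ small enough the factor $1+r$ stays positive on $(-\varepsilon,\varepsilon)$, so $F_{\nabla_{BS}}$ is a symplectic form on $M_{BS}$ and $(L_{BS},\nabla_{BS})|_{M_{BS}}$ is prequantizing for it, establishing (1).

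For (2), the natural $S^1$-action on the $P_0$-component fixes $r$ and preserves $\alpha_0$, so it preserves $\alpha_{BS}$ and hence $\nabla_{BS}$. Contracting $F_{\nabla_{BS}}$ with $\xi^*$ and using $\iota_{\xi^*}d\alpha_0=0$, $dr(\xi^*)=0$, and $\alpha_0(\xi^*)=\sqrt{-1}$ yields $\iota_{\xi^*}F_{\nabla_{BS}} = -\sqrt{-1}\,dr$, so up to the standard prequantization normalization the moment map is exactly the height function $(u,r)\mapsto r$.

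For (3), the $S^1$-orbit through $(u,r)$ is an $S^1$-fiber of $p_0$ sitting at height $r$, along which the connection form of $\nabla_{BS}$ restricts to $(1+r)\alpha_0$. The $\alpha_0$-part contributes trivial holonomy (this is built into the prequantization of $L_0$), while the remaining $r\alpha_0$ contributes holonomy $\exp(2\pi\sqrt{-1}\,r)$; since $\varepsilon<1$ this is trivial iff $r=0$. Alternatively, (3) follows formally from (2) as in the $M_D$ case.

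The argument is essentially routine and I do not anticipate a real obstacle; the only places that will require a moment of care are getting the signs right in the curvature computation and tracking the scalar that relates the symplectic form to the prequantizing curvature, both of which are pinned down by the conventions already used in the $M_D$ argument.
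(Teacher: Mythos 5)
Your overall strategy is exactly the paper's: the published proof consists of the single sentence ``by the similar argument as in the proof of Lemma~\ref{modelMD}'', and your curvature/moment-map/holonomy computation is the intended expansion of that sentence. However, there is one concrete error in your execution. You write the curvature as $F_{\nabla_{BS}}=(1+r)\,d\alpha_0+dr\wedge\alpha_0$ by representing $p_0^*\nabla_0$ by the principal connection form $\alpha_0$ in a ``tautological trivialization''. But in the paper's setup $P_0\to M_0$ (with connection $\alpha_0$) and the prequantizing line bundle $(L_0,\nabla_0)$ are \emph{independent} data: $P_0$ is not assumed to be the unit circle bundle of $L_0$, and in the actual application $P_{(l)}$ is the circle bundle of $\mathcal O(1)|_{Q_{n-1}}$ while $L_{(l)}=\mathcal O(l)|_{Q_{n-1}}$, so no such identification is available. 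The correct curvature is
\[
F_{\nabla_{BS}}=p_0^*F_{\nabla_0}+r\,d\alpha_0+dr\wedge\alpha_0 .
\]
This matters for part (1): your formula's nondegeneracy on the horizontal distribution rests on $d\alpha_0$, which can perfectly well vanish (a flat principal connection), in which case $(1+r)\,d\alpha_0+dr\wedge\alpha_0$ is degenerate. In the correct formula the nondegeneracy on horizontal vectors is supplied by $p_0^*F_{\nabla_0}$, i.e.\ by the pullback of the symplectic form of the closed base $M_0$; the term $r\,d\alpha_0$ is a basic perturbation that is small for $|r|<\varepsilon$, and $dr\wedge\alpha_0$ pairs $\partial/\partial r$ with the fiber direction. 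Compactness of $P_0$ then gives a uniform $\varepsilon$.

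Parts (2) and (3) survive essentially unchanged, because the term you misidentified is basic: $\iota_{\xi^*}\bigl(p_0^*F_{\nabla_0}+r\,d\alpha_0\bigr)=0$, so $\iota_{\xi^*}F_{\nabla_{BS}}=-\alpha_0(\xi^*)\,dr$ and the moment map is $r$ as you say; and along a $p_0$-fibre at height $r$ the holonomy of $p_0^*\nabla_0$ is trivial because the fibre maps to a point of $M_0$ (not ``because it is built into the prequantization''), leaving only the contribution $\exp(\pm2\pi\sqrt{-1}\,r)$ of $r\alpha_0$, which is trivial for $|r|<\varepsilon<1$ iff $r=0$. With the curvature formula corrected as above, your argument is complete and coincides with what the paper intends.
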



\subsection{Local Riemann-Roch numbers}
Using the free $S^1$-action on $M_{D}\setminus M_0$ and 
$M_{BS}\setminus P_{0}$ 
we can define their local Riemann-Roch number 
$RR_{\rm loc}(M_0)$ and $RR_{\rm loc}(P_{0})$. 
On the other hand since $M_0$ is closed 
the usual Riemann-Roch number $RR(M_0)$ is defined. 

\begin{lemma}\label{productformula}
We have the following equalities
$$
RR_{\rm loc}(M_0)=RR_{\rm loc}(P_{0})=
RR(M_0). 
$$
\end{lemma}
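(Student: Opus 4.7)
The plan is to verify each equality by passing to a closed compactification and combining the global Hirzebruch--Riemann--Roch formula with the excision property (Theorem~2.1(3)).

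For the equality $RR_{\rm loc}(M_0) = RR(M_0)$ in the disc model, I would compactify $M_D = P_0 \times_{S^1} D^o_\varepsilon(\C_1)$ to the closed $\CP^1$-bundle $\overline{M_D} = P_0 \times_{S^1} \CP^1$, extending the almost complex structure and the prequantum line bundle $(L_\C, \nabla_\C)$ across the disc glued in at infinity. The fibrewise $S^1$-action on $\overline{M_D}$ then has two fixed-point components, the original $M_0^0 \cong M_0$ and a new $M_0^\infty \cong M_0$ at the opposite pole, and by shrinking $\varepsilon$ these are the only Bohr--Sommerfeld loci. Excision gives
\[
RR(\overline{M_D}) \;=\; RR_{\rm loc}(M_0^0) + RR_{\rm loc}(M_0^\infty).
\]
The left-hand side is computed by Leray applied to $\pi\colon \overline{M_D} \to M_0$, using that $L_\C$ restricts to $\mathcal{O}(1)$ on each $\CP^1$-fibre. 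The opposite-pole term $RR_{\rm loc}(M_0^\infty)$ is peeled off by the symmetry reversing the $S^1$-weight, which identifies it with the same local-model quantity but with $L_0$ replaced by a twist involving the normal bundle $\nu = P_0 \times_{S^1} \C_1$; cancelling this twisted piece against the corresponding Leray summand leaves $\chi(M_0, L_0) = RR(M_0)$.

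For the equality $RR_{\rm loc}(P_0) = RR(M_0)$ in the $M_{BS}$ model, I would directly compare the two models outside the core. Via the substitution $r = |z|^2/2$ and the identification of the diagonal $S^1$-action with the $P_0$-action, one gets $M_D \setminus M_0 \cong P_0 \times (0, \varepsilon^2/2)$, and the restriction of $(J, L_\C, \nabla_\C)$ on this collar is deformable, through data satisfying the hypotheses of Section~2, into the $M_{BS}$-model data on $P_0 \times (0, \varepsilon)$ (and similarly on the $r<0$ side). Since $RR_{\rm loc}$ depends only on a neighborhood of the core and is invariant under such continuous deformations (Theorem~2.1(2)--(3)), this yields $RR_{\rm loc}(P_0) = RR_{\rm loc}(M_0)$.

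The main obstacle will be the bookkeeping of the opposite-pole contribution in $\overline{M_D}$: the $S^1$-weight at $M_0^\infty$ is the negative of the weight at $M_0^0$, which reverses the compatible almost complex structure on the normal bundle and introduces a dual/sign flip in the pushforward. Making this cancellation precise within the framework of Section~2, and verifying that Theorem~2.1's hypotheses (especially the non-triviality of holonomy on the complement) survive both the compactification and the collar deformation, is where the delicate work lies.
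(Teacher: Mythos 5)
The paper's proof is a direct application of the product formula (Theorem~2.1(4)): both $M_D=P_0\times_{S^1}D^o_\varepsilon(\C_1)$ and $M_{BS}=P_0\times(-\varepsilon,\varepsilon)=P_0\times_{S^1}(S^1\times(-\varepsilon,\varepsilon))$ are associated bundles over $M_0$ whose fibre local models satisfy $RR_{\rm loc}(D^o_\varepsilon)=RR_{\rm loc}(S^1\times(-\varepsilon,\varepsilon))=1$ as $S^1$-equivariant local Riemann--Roch numbers, so both local indices equal $RR(M_0)\cdot 1$. Your proposal avoids the product formula entirely, and this is where it breaks down. For the first equality, the compactification argument is circular as stated: excision gives $RR(\overline{M_D})=RR_{\rm loc}(M_0^0)+RR_{\rm loc}(M_0^\infty)$, but $RR_{\rm loc}(M_0^\infty)$ is a local index of exactly the same type as the one you are computing --- your weight-reversing symmetry identifies it with a disc-model local index for $L_0$ twisted by the normal bundle, which is another unknown. ``Cancelling this twisted piece against the corresponding Leray summand'' presupposes that this local contribution equals the corresponding Euler characteristic, i.e.\ the very statement of the lemma for a twisted bundle. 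You have one equation in two unknowns; what closes the loop is precisely an independent fibrewise computation, which is what the product formula supplies.

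For the second equality, the proposed collar deformation cannot work. Excision and deformation invariance compare data on a neighborhood of a \emph{fixed} compact core; here the cores are different ($M_0$, of codimension $2$, versus the hypersurface $P_0$), and the ambient local models are not diffeomorphic near their cores: $M_D\setminus M_0$ is connected (one end), whereas $M_{BS}\setminus P_0$ has two ends --- there is no ``$r<0$ side'' of $M_D$ to match the second end of $M_{BS}$. No continuous deformation of $(J,L,\nabla)$ on the complements of the cores identifies a disc bundle with a cylinder, so this step needs a different mechanism; again the paper's identification $P_0\times(-\varepsilon,\varepsilon)=P_0\times_{S^1}(S^1\times(-\varepsilon,\varepsilon))$ plus the product formula is the intended one.
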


\begin{proof}
These equalities follows from 
the product formula and 
the facts 
$RR_{\rm loc}(D_{\varepsilon}^o)=
RR_{\rm loc}(S^1\times (-\varepsilon,\varepsilon))=1$ as 
$S^1$-equivariant local Riemann-Roch numbers. 
Note that we use the identification 
$P_0\times (-\varepsilon, \varepsilon)=
P_0\times_{S^1}(S^1\times(-\varepsilon,\varepsilon))$. 
\end{proof}

\subsection{A formula for local Riemann-Roch numbers}
Let $(M,L)$ be a prequantized symplectic manifold 
with a Hamiltonian $S^1$-action and its moment map $\mu:M\to \R$. 
We assume that $0$ is a regular value of $\mu$ and 
$\mu^{-1}(0)$ is a compact submanifold of $M$. 
We also assume that the $S^1$-action on $\mu^{-1}(0)$ is free. 
Let $M_0:=\mu^{-1}(0)/S^1$ be the symplectic quotient at $0$, 
which is a compact symplectic manifold with a 
prequantizing line bundle $L_0:=L|_{\mu^{-1}(0)}/S^1$. 
We put $P_0:=\mu^{-1}(0)$ and then the natural projection 
$P_0\to M_0$ gives a structure of a principal $S^1$-bundle over $M_0$. 
Consider the symplectic cut of $M$ at 0,  
$$
M_{\rm cut}:=\mu^{-1}(-\infty, 0)\cup M_{0}. 
$$
For these data we have two local Riemann-Roch numbers 
$RR_{\rm loc}(P_0)$ and $RR_{\rm loc}(M_{0})$.  

\begin{theorem}\label{formulaforRR}
We have the following equalities
$$
RR_{\rm loc}(P_0)=RR_{\rm loc}(M_{0})
=RR(M_0). 
$$
\end{theorem}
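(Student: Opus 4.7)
The plan is to reduce the statement to Lemma \ref{productformula} by identifying neighborhoods of $P_0\subset M$ and of $M_0\subset M_{\rm cut}$ with the local models $M_{BS}$ and $M_D$ introduced in Subsection 5.1, and then invoking the excision formula for the relative index.

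First I would construct an equivariant tubular neighborhood of $P_0=\mu^{-1}(0)$ in $M$. Because $0$ is a regular value and the $S^1$-action on $P_0$ is free, the gradient (with respect to an $S^1$-invariant compatible metric) of $\mu$ is nonvanishing on a neighborhood of $P_0$, and its flow supplies an $S^1$-equivariant diffeomorphism $\Phi\colon P_0\times(-\varepsilon,\varepsilon)\to U\subset M$ with $\mu\circ\Phi(u,r)=r$. Near $P_0$ one can then trivialize the prequantizing line bundle along the normal direction and, using the moment-map equation $\iota_{X_\mu}\omega=-d\mu$ together with the prequantization condition, identify $(L,\nabla)|_U$ with the pullback of $(L_{BS},\nabla_{BS})$ up to a gauge/connection deformation that preserves the curvature class. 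The deformation invariance of the relative index (property (2) of Theorem 2.1) allows us to replace $(L,\nabla)$ on $U$ by the model data without changing $RR_{\rm loc}(P_0)$.

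Next I would do the analogous construction on the symplectic cut side. A neighborhood of $M_0$ in $M_{\rm cut}$ is, by the very definition of the cut, the symplectic reduction of $\mu^{-1}([0,\varepsilon^2/2])$ by $S^1$ together with $M_0$ glued in; standard symplectic neighborhood theorems identify this equivariantly with $P_0\times_{S^1}D^o_\varepsilon(\C_1)$, and the induced prequantizing line bundle matches $(L_{\C},\nabla_{\C})|_{M_D}$ up to deformation. By excision (property (3) of Theorem 2.1), the local Riemann-Roch number $RR_{\rm loc}(M_0)$ defined from $M_{\rm cut}$ equals the local Riemann-Roch number of the zero section inside $M_D$.

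Combining these two identifications with Lemma \ref{productformula}, which states precisely that $RR_{\rm loc}(M_0)=RR_{\rm loc}(P_0)=RR(M_0)$ for the local models $M_D$ and $M_{BS}$, yields the desired equalities. The main obstacle I anticipate is checking that the equivariant tubular neighborhood carries the prequantizing line bundle and connection to the model $(L_{BS},\nabla_{BS})$ (respectively $(L_{\C},\nabla_{\C})$) up to an admissible deformation, i.e., one that keeps condition (4) of Section 2 (no parallel sections along orbits) valid throughout; once this is verified, the theorem follows by a combination of deformation invariance, excision, and Lemma \ref{productformula}.
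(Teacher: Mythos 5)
Your proposal is correct and follows essentially the same route as the paper: identify neighborhoods of $P_0\subset M$ and $M_0\subset M_{\rm cut}$ with the local models $M_{BS}$ and $M_D$, then conclude by Lemma~\ref{productformula} together with excision and deformation invariance. The only difference is that the paper outsources the neighborhood identification to a Darboux-type theorem (Proposition~5.11 of their earlier work), whereas you sketch the equivariant tubular neighborhood construction directly.
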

\begin{proof}
A version of Darboux's theorem (\cite[Proposition~5.11]{Fujita-Furuta-Yoshida3}) implies that 
we can use local models $M_{BS}$ and $M_D$ for 
$M_0$ and $P_0$. 
The equalities follow from 
the product formula (Lemma~\ref{productformula}). 
\end{proof}

\section{Proof of the main theorem}
In this section we use following notations 
for a fixed positive integer $k$ and an integer $l$ with $0<l\le k$. 
\begin{itemize}
\item $X_k=D_k^o(TS^n)\cup h^{-1}_{01}(k)/S^1
\cong{\rm Gr}_2^+(\R^{n+2})\cong Q_n$
\item $P_{(l)}=h^{-1}_{01}(l)=S_l(TS^n)$
\item $M_{(l)}=P_{(l)}/S^1\cong Q_{n-1}$
\item $\displaystyle RR_n^l:=\binom{n+l}{n}+\binom{n+l-1}{n}$, 
where $\displaystyle\binom{\cdot}{\cdot}$ is the binomial coefficient. 
\end{itemize}

\subsection{Pre-quantizing line bundles}
Let $(L_{0},d-2\pi\sqrt{-1}\alpha)$ be the 
prequantizing line bundle over $V_0$, 
where $L_{0}$ is the trivial line bundle and $\alpha$ is 
the Liouville 1-form. 
Note that the weight  of the $S^1$-action 
on the space of global parallel sections 
of $L_{0}|_{P_{(l)}}$ is equal to $l$, 
and hence, the prequantizing line bundle 
over $M_{(l)}$ is given by $P_{(l)}\times_{S^1}\C_{l}$, 
where $\C_l$ is the complex line with the standard 
$S^1$-action of weight $l$. 
On the other hand, as in 
Remark~\ref{normalbundle} and Remark~\ref{normalbundle2}, 
$P_{(l)}$ is isomorphic to the unit circle bundle of 
the tautological plane bundle $\gamma_2\cong {\mathcal O}(1)|_{Q_{n-1}}
\to {\rm Gr}_2^+(\R^{n+1})\cong Q_{n-1}$. 
Summarizing we have the following. 

\begin{lemma}
Under the identification $M_{(l)}\cong Q_{n-1}$, 
$L_{(l)}:={\mathcal O}(l)|_{Q_{n-1}}$ gives a prequantizing line bundle 
over $M_{(l)}$. 
\end{lemma}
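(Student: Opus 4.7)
The lemma amounts to piecing together identifications stated in the paragraph preceding it, so the plan is essentially to write down how those identifications compose.

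The preceding discussion already establishes that the reduced prequantizing line bundle over $M_{(l)}$ is the associated bundle $P_{(l)}\times_{S^1}\C_l$: since the Hamiltonian $S^1$-action preserves the prequantizing connection on $(L_{0},d-2\pi\sqrt{-1}\alpha)$ and the moment map $h_{01}$ equals $l$ on $P_{(l)}$, the Kostant--Souriau formula forces the lifted fiberwise $S^1$-action to have weight $l$, and the quotient is the weight-$l$ associated bundle. It remains to identify $P_{(l)}\times_{S^1}\C_l$ with ${\mathcal O}(l)|_{Q_{n-1}}$. By Remarks~\ref{normalbundle} and \ref{normalbundle2}, $P_{(l)}$ is isomorphic as a principal $S^1$-bundle to the unit circle bundle of $\gamma_2\cong {\mathcal O}(1)|_{Q_{n-1}}$; the required $S^1$-equivariance is precisely the assertion, stated in the paragraph preceding Lemma~\ref{X_r=X}, that under the diffeomorphism $S_r(TS^n)\cong S_{1/r}(\gamma_2)$ the normalized geodesic flow goes over to the standard principal $S^1$-action on $S_{1/r}(\gamma_2)$. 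Therefore $P_{(l)}\times_{S^1}\C_1\cong \gamma_2$, and taking the $l$-th tensor power gives $P_{(l)}\times_{S^1}\C_l\cong \gamma_2^{\otimes l}\cong {\mathcal O}(l)|_{Q_{n-1}}$.

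The one genuinely substantive point is the sign of the weight, i.e., whether the answer is ${\mathcal O}(+l)$ or ${\mathcal O}(-l)$. Conceptually this is settled by observing that the induced connection descends to one whose curvature is the reduced symplectic form on $M_{(l)}\cong Q_{n-1}$; since that form is positive, the resulting line bundle is ample and hence must be ${\mathcal O}(+l)$. Checking that the orientation conventions chosen for $\gamma_2$ in Remark~\ref{normalbundle2} line up with the weight-$l$ convention for $\C_l$ used in the descent is the only real calculation; I expect this sign-matching to be the main (but minor) obstacle.
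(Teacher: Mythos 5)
Your proposal is correct and follows essentially the same route as the paper: the paper gives no separate proof, since the lemma is stated as a summary of the immediately preceding paragraph, which identifies the reduced prequantizing bundle as $P_{(l)}\times_{S^1}\C_l$ (weight $l$ from the holonomy/moment-map value on $P_{(l)}$) and identifies $P_{(l)}$ with the unit circle bundle of $\gamma_2\cong{\mathcal O}(1)|_{Q_{n-1}}$ via Remarks~\ref{normalbundle} and \ref{normalbundle2}. Your additional attention to the sign of the weight (settled by positivity of the reduced symplectic form) is a reasonable refinement of the same argument, not a different approach.
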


On the other hand, the symplectic cutting construction 
for $(V_0, L_{0})$ yields a prequantizing line bundle 
$L_k\to X_k$ whose restriction to the quotient $M_{(k)}$ is 
given by $(L_{0}|_{h^{-1}_{01}(k)}\otimes \C_k)/S^1=L_{(k)}$.  

\begin{lemma}
Under the identification $X_k\cong Q_n$, 
the pre-quantizing line bundle $L_k$ is isomorphic 
to ${\mathcal O}(k)$ as a prequantizing line bundle. 
\end{lemma}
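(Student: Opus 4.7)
The plan is to identify $L_k$ by computing its isomorphism class in $\operatorname{Pic}(Q_n)$, and then to lift this to an isomorphism of prequantizing line bundles using the simply-connectedness of $Q_n$.

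First I would observe that both $L_k$ and $\mathcal{O}(k)|_{Q_n}$ are trivial as smooth complex line bundles on the complement $X_k \setminus M_{(k)} \cong Q_n \setminus Q_{n-1}$. For $L_k$ this is built into the construction: its restriction to $D_k^o(TS^n)$ is the underlying bundle of the trivial pair $(V_0 \times \mathbb{C}, d - 2\pi\sqrt{-1}\alpha)$ on $V_0$. For $\mathcal{O}(k)|_{Q_n}$, the global section $z_{n+1}^k$ has zero divisor $k\cdot Q_{n-1}$, hence trivializes $\mathcal{O}(k)|_{Q_n}$ away from $Q_{n-1}$. Then the long exact sequence of the pair $(Q_n, Q_n \setminus Q_{n-1})$ combined with the Thom isomorphism $H^2(Q_n, Q_n \setminus Q_{n-1}) \cong H^0(Q_{n-1}) \cong \mathbb{Z}$ shows that the kernel of the restriction map $H^2(Q_n;\mathbb{Z}) \to H^2(Q_n\setminus Q_{n-1};\mathbb{Z})$ is cyclic, generated by the Poincar\'e dual of $Q_{n-1}$, which equals $c_1(\mathcal{O}(1)|_{Q_n})$. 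Since $c_1\colon \operatorname{Pic}(Q_n) \to H^2(Q_n;\mathbb{Z})$ is injective ($Q_n$ is simply connected with $H^{0,1}(Q_n)=0$), we conclude $L_k \cong \mathcal{O}(a)|_{Q_n}$ for some integer $a$.

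To pin down $a$, I would distinguish cases. For $n \geq 2$, the previous lemma gives $L_k|_{M_{(k)}} \cong L_{(k)} \cong \mathcal{O}(k)|_{Q_{n-1}}$, so restricting $\mathcal{O}(a)|_{Q_n}$ yields $\mathcal{O}(a)|_{Q_{n-1}} \cong \mathcal{O}(k)|_{Q_{n-1}}$. Since $\mathcal{O}(1)|_{Q_{n-1}}$ is ample and $Q_{n-1}$ has positive dimension, this class has infinite order in $\operatorname{Pic}(Q_{n-1})$, forcing $a = k$. For $n = 1$ the divisor $M_{(k)} = Q_0$ is zero-dimensional and carries no useful line bundles, so instead I would compute the degree of $L_k$ on $X_1 \cong \CP^1$ directly from the symplectic volume: the cylinder $D_k^o(TS^1) = S^1 \times (-k, k)$ contributes area $\int_{-k}^{k} 2\pi \, dp = 4\pi k$, giving $\deg L_k = (2\pi)^{-1}\int_{X_1} \omega_k = 2k$, which matches $\deg \mathcal{O}(k)|_{Q_1} = 2k$ (since $Q_1 \subset \CP^2$ is a conic of degree two).

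Finally, to upgrade the resulting isomorphism of smooth complex line bundles to one of prequantizing line bundles, I would use the simply-connectedness of $Q_n$: since $H^1(Q_n;\mathbb{R}) = 0$, any two Hermitian connections on a fixed line bundle with cohomologous curvatures differ by an exact one-form, which can be absorbed by a gauge transformation. The main obstacle in this argument is the case $n = 1$, where the restriction-to-divisor step is vacuous and an independent computation of $c_1(L_k)$ is required; fortunately on $\CP^1$ this reduces to the elementary area integral above.
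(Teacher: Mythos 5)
Your proposal is correct, and it is a refinement of the paper's argument rather than a different one in spirit: both proofs rest on the observation that on the simply connected $Q_n$ a prequantizing line bundle is determined up to isomorphism by its Chern class, and both ultimately identify $c_1(L_k)$ with $c_1(\mathcal{O}(k))$ through the divisor $Q_{n-1}$. Where you differ is in how that identification is reached. The paper simply asserts that the restriction $H^2(Q_n;\Z)\to H^2(Q_{n-1};\Z)$ is an isomorphism carrying $c_1(\mathcal{O}(k))$ to $c_1(L_{(k)})$; that map is an isomorphism only for $n\ge 4$ and injective only for $n\ge 3$ (for $n=2$ one has $H^2(Q_2;\Z)\cong\Z^2$ and the restriction has kernel, and for $n=1$ it is the zero map), so the paper's one-line argument is incomplete in low dimensions. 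You instead first use the canonical trivializations of $L_k$ and of $\mathcal{O}(k)$ on the complement of $M_{(k)}\cong Q_{n-1}$, together with the Thom isomorphism for the normal bundle, to conclude that $c_1(L_k)$ lies in $\Z\cdot c_1(\mathcal{O}(1))$, and only then pin down the multiple — by restriction to $Q_{n-1}$ for $n\ge 2$, and by the area computation $\deg L_k=2k=\deg\mathcal{O}(k)|_{Q_1}$ for $n=1$ (your $(2\pi)^{-1}\int\omega$ normalization is the one consistent with the holonomy statement of Lemma~\ref{BSorbits}, not with the literal factor $2\pi$ in the connection form, but that is the paper's inconsistency, not yours). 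This extra step is exactly what closes the gap at $n=1,2$. Two cosmetic remarks: what you need is that $c_1$ classifies \emph{smooth} complex line bundles, so the appeal to $H^{0,1}(Q_n)=0$ is unnecessary; and the final matching of connections should be understood modulo the deformation invariance of the index, since the cut symplectic form on $X_k$ and the Fubini--Study form on $Q_n$ are only cohomologous — but this is the same license the paper takes.
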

\begin{proof}
Note that since $Q_n$ is simply connected the isomorphism class 
of pre-quantizing line bundles is unique. 
The required isomorphism follows from the isomorphism 
$H^2(Q_n,\Z)\stackrel{\cong}{\rightarrow} H^2(Q_{n-1},\Z)$,  
$c_1({\mathcal O}(k))\mapsto c_1(L_{(k)})$. 
\end{proof}

\subsection{Riemann-Roch number of the compactification} 
\begin{lemma}\label{dim O(l)}
For each positive integer $l$, 
the dimension of the space of holomorphic sections 
of ${\mathcal O}(l)|_{Q_n}$ is equal to $RR_n^l$. 
\end{lemma}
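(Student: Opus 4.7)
The plan is to use the standard short exact sequence of sheaves on $\CP^{n+1}$ associated to the quadric hypersurface. Since $Q_n$ is cut out by a single homogeneous polynomial of degree $2$, its ideal sheaf is $\CO_{\CP^{n+1}}(-2)$, and twisting by $\CO(l)$ yields the exact sequence
\begin{equation*}
0 \to \CO_{\CP^{n+1}}(l-2) \to \CO_{\CP^{n+1}}(l) \to \CO_{Q_n}(l) \to 0.
\end{equation*}
The associated long exact sequence in cohomology is the main tool.

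First I would invoke the classical computation of the cohomology of line bundles on projective space: $H^0(\CP^{n+1},\CO(l)) = \binom{n+1+l}{n+1}$ for $l \ge 0$, while $H^1(\CP^{n+1},\CO(l-2)) = 0$ for all $l$ (this holds for $n \ge 1$; the case $n=0$, where $Q_0$ is two points, can be handled separately and trivially). Plugging these into the long exact sequence gives
\begin{equation*}
\dim H^0(Q_n,\CO(l)) = \binom{n+1+l}{n+1} - \binom{n+l-1}{n+1},
\end{equation*}
and since $\CO(l)|_{Q_n}$ is positive, higher cohomology vanishes by Kodaira vanishing, so this dimension equals the holomorphic Euler characteristic and hence the number we want.

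The remaining step is the combinatorial identity
\begin{equation*}
\binom{n+1+l}{n+1} - \binom{n+l-1}{n+1} = \binom{n+l}{n} + \binom{n+l-1}{n},
\end{equation*}
which I would verify by two applications of Pascal's rule: $\binom{n+1+l}{n+1} = \binom{n+l}{n+1} + \binom{n+l}{n}$ and $\binom{n+l}{n+1} = \binom{n+l-1}{n+1} + \binom{n+l-1}{n}$; subtracting $\binom{n+l-1}{n+1}$ then yields $RR_n^l$.

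I do not anticipate a genuine obstacle: everything reduces to a textbook exact sequence on projective space together with a Pascal-rule identity. The only point that needs care is the convention on $\binom{\cdot}{\cdot}$ when the lower argument exceeds the upper one (so that the formula degenerates correctly in low degrees and in the boundary case $n=0$, where $Q_0$ is a pair of points and both sides evaluate to $2$ for every $l \ge 1$).
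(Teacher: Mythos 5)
Your argument is correct, and it reaches the formula by a genuinely different route than the paper. You use the ideal-sheaf sequence $0\to\CO_{\CP^{n+1}}(l-2)\to\CO_{\CP^{n+1}}(l)\to\CO_{Q_n}(l)\to 0$ together with the vanishing of $H^1(\CP^{n+1},\CO(l-2))$, which gives $\dim H^0(Q_n,\CO(l))=\binom{n+l+1}{n+1}-\binom{n+l-1}{n+1}$, and then you convert this to $RR_n^l$ by Pascal's rule; both steps check out, including the degenerate cases ($l=1$, where $\binom{n}{n+1}=0$, and $n=0$, where the vanishing of $H^1(\CP^1,\CO(l-2))$ still holds since $l-2\ge -1$, so you do not even need to treat $Q_0$ separately). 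The paper instead views $Q_n$ as the double cover of $\CP^n$ branched along $Q_{n-1}$ via the involution $z_{n+1}\mapsto -z_{n+1}$, and splits $H^0(Q_n,\CO(l))=H^+_{n,l}\oplus H^-_{n,l}$ into eigenspaces, identifying $H^+_{n,l}\cong H^0(\CP^n,\CO(l))$ and $H^-_{n,l}\cong H^0(\CP^n,\CO(l-1))$ by dividing anti-invariant sections by $z_{n+1}$. The two proofs buy different things: yours is the shorter, more textbook computation and makes the higher-cohomology vanishing transparent from the long exact sequence, while the paper's eigenspace decomposition explains structurally why $RR_n^l$ is a sum of exactly two binomial coefficients (they are the dimensions of the two eigenspaces), which is the form in which the number is used later in the telescoping identity of Lemma~\ref{binom}. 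One cosmetic point: your closing appeal to Kodaira vanishing and the Euler characteristic is not needed for this lemma, which asserts only the dimension of $H^0$; that vanishing is used separately in Proposition~\ref{RRofXk}.
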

\begin{proof}
The involution 
$$
[z_0:z_1:\ldots:z_n:z_{n+1}]\mapsto 
[z_0:z_1:\ldots:z_n:-z_{n+1}]
$$acts on $Q_n$ and its orbit space is $\CP^{n}$. 
The projection map $Q_n\to \CP^n$ given by 
$[z_0:\ldots:z_n:z_{n+1}]\mapsto [z_0:\ldots:z_n]$ is 
a branched covering with branching locus $Q_{n-1}\subset\CP^n$. 
The involution lifts to ${\mathcal O}(l)|_{Q_n}$ so that 
the quotient bundle is ${\mathcal O}(l)\to \CP^n$. 
Consider the decomposition of the space of holomorphic sections 
with respect to the involution, 
$H^0(Q_n,{\mathcal O}(l))=H^+_{n,l}\oplus H^-_{n,l}$, 
where the involution acts on $H^{\pm}_{n,l}$ by $\pm 1$. 
The invariant part $H^+_{n,l}$ is isomorphic to 
$H^0(\CP^{n},{\mathcal O}(l))$, 
which is the vector space of
homogeneous polynomial of degree $l$ of 
$(n+1)$-variable $z_0,\ldots,z_{n}$. 
Its dimension is given by the binomial coefficient 
$\binom{n+l}{n}$. 
On the other hand any section $s\in H^-_{n,l}$ can be divisible by $z_{n+1}$, 
and $s/z_{n+1}$ defines a section in $H_{n,l-1}^+$. 
In particular we have 
$\dim H_{n,l}^-=\dim H^0(\CP^{n},{\mathcal O}(l-1))=
\binom{n+l-1}{n}$. 
\end{proof}

\begin{prop}\label{RRofXk}
The Riemann-Roch number of $X_k$ with the prequantizing line bundle 
$L_{k}$ is given by $RR(X_k)=RR_n^k$. 
\end{prop}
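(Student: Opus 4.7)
The plan is to use homotopy invariance to replace the almost complex structure $J$ on $X_k$ with the integrable K\"ahler structure $J_2$ on $Q_n$, so that $RR(X_k)$ becomes an honest holomorphic Euler characteristic, and then to apply Lemma~\ref{dim O(l)} after verifying that the higher cohomology vanishes.

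First, since $X_k$ is closed, property~(1) of the theorem in Section~2 identifies $RR(X_k)$ with the index of the spin$^c$ Dirac operator determined by $(J, L_k)$, and property~(2) guarantees this index is invariant under continuous deformations of $J$. By construction (see the specification of $J$ preceding Proposition~\ref{localization}, together with Proposition~\ref{J_0simJ_1}), $J$ is homotopic to $J_1$, and Lemma~\ref{J_1=J_2} identifies $J_1$ with the standard complex structure $J_2$ on $Q_n$ under the diffeomorphism $X_k \cong Q_n$ of Lemmas~\ref{X_r=X} and~\ref{X_r=Q_n}. Combined with the isomorphism $L_k \cong \mathcal{O}(k)|_{Q_n}$ of prequantizing line bundles established in the previous lemma, this gives
$$
RR(X_k) \;=\; \chi(Q_n, \mathcal{O}(k)) \;=\; \sum_{i \geq 0} (-1)^i \dim H^i(Q_n, \mathcal{O}(k)).
$$

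Next I would verify that $H^i(Q_n, \mathcal{O}(k)) = 0$ for all $i \geq 1$ and all $k \geq 1$. Since $Q_n$ is cut out by a section of $\mathcal{O}(2)$ on $\CP^{n+1}$, there is a short exact sequence of sheaves
$$
0 \longrightarrow \mathcal{O}_{\CP^{n+1}}(k-2) \longrightarrow \mathcal{O}_{\CP^{n+1}}(k) \longrightarrow \mathcal{O}_{Q_n}(k) \longrightarrow 0,
$$
and the standard vanishing $H^i(\CP^{n+1}, \mathcal{O}(m)) = 0$ for $0 < i < n+1$ forces $H^i(Q_n, \mathcal{O}(k)) = 0$ in the range $0 < i < n$. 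For the top degree $i = n$, the adjunction formula gives $K_{Q_n} \cong \mathcal{O}(-n)|_{Q_n}$, so Serre duality yields $H^n(Q_n, \mathcal{O}(k))^* \cong H^0(Q_n, \mathcal{O}(-k-n))$, which vanishes for $k \geq 1$.

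Combining these vanishings with Lemma~\ref{dim O(l)} at $l = k$ yields $RR(X_k) = \dim H^0(Q_n, \mathcal{O}(k)) = RR_n^k$, as required. The substantive content of the proposition is not in this last computation, which is routine algebraic geometry, but in the compatibilities assembled in Section~4 that make it legitimate to identify the spin$^c$ index on $(X_k, L_k)$ with $\chi(Q_n, \mathcal{O}(k))$; within the proof itself, the only step needing any real checking is the Serre duality argument in top degree, and even that is standard.
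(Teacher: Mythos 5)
Your proof is correct and follows the same route as the paper: identify $(X_k,L_k,J)$ with $(Q_n,\mathcal{O}(k),J_2)$ via homotopy invariance and the lemmas of Section~4, reduce $RR(X_k)$ to $\chi(Q_n,\mathcal{O}(k))$, kill the higher cohomology, and invoke Lemma~\ref{dim O(l)}. The only difference is that the paper disposes of $H^{i>0}(Q_n,\mathcal{O}(k))$ by citing the Kodaira vanishing theorem (note $\mathcal{O}(k)=K_{Q_n}\otimes\mathcal{O}(k+n)$ with $\mathcal{O}(k+n)$ ample), whereas you derive it by hand from the hypersurface restriction sequence and Serre duality; both are valid, and yours is the more self-contained of the two.
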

\begin{proof}
We use the identifications 
$X_k=Q_n$, $L_k={\mathcal O}(k)$ and 
the natural complex structure $J_2=J_1$ of $Q_n\subset \CP^{n+1}$.  
By the Kodaira vanishing theorem, $RR(X_k)=RR(Q_n)$ is equal to the 
dimension of the space of holomorphic sections 
$H^0(Q_n,{\mathcal O}(k))=RR^k_n$. 
\end{proof}

\subsection{Localization}
Recall that the complement of $S^n$ and 
$h_{01}^{-1}(k)/S^1\cong Q_{n-1}$ in $X_k$ has a free Hamiltonian $S^1$-action 
induced by the geodesic flow. 
The holonomy representation of the 
restriction of $L_k$ to a orbit is trivial if and only if 
the orbit is contained in $P_{(l)}=h^{-1}_{01}(l)$ for some integer $l$. 

\begin{proof}[Proof of Theorem~\ref{RRofBS}]
The required formula follows from 
Theorem~\ref{formulaforRR}, Lemma~\ref{dim O(l)} 
and the identification $M_{(l)}\cong Q_{n-1}$. 
\end{proof}

We also have the 
local Riemann-Roch number $RR_{\rm loc}(M_{(k)})$ in $X_k$ 
with the prequantizing line bundle $L_k|_{M_{(k)}}$. 
\begin{prop}\label{RRofMk}
$RR_{\rm loc}(M_{(k)})=RR_{n-1}^k$. 
\end{prop}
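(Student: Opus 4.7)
The plan is to reduce $RR_{\rm loc}(M_{(k)})$ to the ordinary Riemann-Roch number $RR(Q_{n-1}, {\mathcal O}(k)|_{Q_{n-1}})$ via the product formula, and then invoke a direct variant of the calculation already carried out for $Q_n$ in Lemma~\ref{dim O(l)}.

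First I would identify a neighborhood of $M_{(k)}$ in $X_k$ with the local model $M_D$ of Subsection~5.1. This is essentially built into the symplectic cut: Lemma~\ref{embednu} provides an embedding of the normal bundle $\nu_{(k)} = P_{(k)} \times_{S^1} \C_1$ as a tubular neighborhood of $M_{(k)}$, and under this identification the $S^1$-action used to define $RR_{\rm loc}(M_{(k)})$, inherited from the normalized geodesic flow on the $D_k^o(TS^n)$-side, becomes the natural rotation on the $\C_1$-factor. Taking $P_0 = P_{(k)}$, $M_0 = M_{(k)}$, and prequantizing line bundle $L_0 = L_k|_{M_{(k)}} = L_{(k)}$, the local data matches that of $M_D$ (up to a harmless constant shift of the moment map by $k$).

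Next I would invoke Theorem~\ref{formulaforRR} (equivalently, the product formula of Lemma~\ref{productformula}) to obtain
$$RR_{\rm loc}(M_{(k)}) = RR(M_{(k)}).$$
This is the same argument that will be used to establish Theorem~\ref{RRofBS}, applied now to the single divisor $M_{(k)}$ rather than to a regular Bohr-Sommerfeld level.

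Finally I would compute the ordinary Riemann-Roch number. Under the identification $M_{(k)} \cong Q_{n-1}$ the prequantizing line bundle becomes ${\mathcal O}(k)|_{Q_{n-1}}$, which is ample for $k>0$, so Kodaira vanishing gives $RR(Q_{n-1}, {\mathcal O}(k)) = \dim H^0(Q_{n-1}, {\mathcal O}(k))$. Applying Lemma~\ref{dim O(l)} with $n$ replaced by $n-1$ and $l$ replaced by $k$ then yields exactly $RR_{n-1}^k$.

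I do not expect substantive difficulties: the heavy lifting is already done in Theorem~\ref{formulaforRR} and Lemma~\ref{dim O(l)}, so the argument is essentially assembly. The only point requiring verification is the matching of the local $S^1$-data near $M_{(k)}$ with that of the model $M_D$, and this is immediate from Lemma~\ref{embednu} together with the naturality of the rotation action on the $\C_1$-factor of $\nu_{(k)}$.
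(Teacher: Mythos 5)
Your proposal is correct and follows essentially the same route as the paper: the paper's proof likewise combines the identifications $M_{(k)}\cong Q_{n-1}$ and $L_k|_{M_{(k)}}\cong\mathcal{O}(k)|_{Q_{n-1}}$ with Theorem~\ref{formulaforRR} (whose proof is exactly the reduction to the local model $M_D$ via the Darboux-type theorem that you spell out) and Lemma~\ref{dim O(l)} applied with $n-1$ in place of $n$. The extra detail you give about matching the tubular neighborhood of Lemma~\ref{embednu} with the model $M_D$ is precisely the content already packaged into Theorem~\ref{formulaforRR}, so no new argument is needed.
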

\begin{proof}
The required equality follows from the identifications 
$M_{(k)}\cong Q_{n-1}$, $L_{k}|_{M_{(k)}}\cong {\mathcal O}(k)|_{Q_{n-1}}$, 
Theorem~\ref{formulaforRR} and Lemma~\ref{dim O(l)}. 
\end{proof}

\begin{lemma}\label{binom}
For $n\ge 1$, we have the following equality. 
$$
RR_{n-1}^0+RR_{n-1}^1+\cdots+RR_{n-1}^k=RR_n^k. 
$$
\end{lemma}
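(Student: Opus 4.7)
The identity is a purely combinatorial statement about binomial coefficients, and my plan is to reduce it to the hockey stick identity
$$\sum_{j=0}^{m}\binom{n-1+j}{n-1}=\binom{n+m}{n},$$
which itself is immediate by induction on $m$ using Pascal's rule $\binom{a}{b}=\binom{a-1}{b}+\binom{a-1}{b-1}$ (or can be taken as standard).

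First I would substitute the definition $RR_{n-1}^{l}=\binom{n-1+l}{n-1}+\binom{n-2+l}{n-1}$ and split the sum into two pieces:
$$\sum_{l=0}^{k}RR_{n-1}^{l}=\sum_{l=0}^{k}\binom{n-1+l}{n-1}+\sum_{l=0}^{k}\binom{n-2+l}{n-1}.$$
The first sum is exactly the hockey stick identity with $m=k$, so it evaluates to $\binom{n+k}{n}$. For the second sum, I would note that the $l=0$ term equals $\binom{n-2}{n-1}=0$ (using $n\ge 1$ so that $n-1\ge n-2$ with a strict gap when $n\ge 2$; the case $n=1$ should be checked by hand or by the convention $\binom{-1}{0}=0$), and then reindex with $l'=l-1$ to get $\sum_{l'=0}^{k-1}\binom{n-1+l'}{n-1}=\binom{n+k-1}{n}$, again by hockey stick.

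Adding the two evaluated sums yields $\binom{n+k}{n}+\binom{n+k-1}{n}=RR_n^{k}$, which is the desired identity.

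The main point to watch is the boundary term at $l=0$ in the second sum, together with ensuring that the $n=1$ edge case is consistent with whatever convention for $\binom{-1}{0}$ is in force; once that is settled the proof is essentially a one-line application of the hockey stick identity, so I do not expect any real obstacle.
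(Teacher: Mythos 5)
Your proof is correct and takes essentially the same route as the paper's: both reduce the identity to the hockey-stick summation $\sum_{l=0}^{m}\binom{n-1+l}{n-1}=\binom{n+m}{n}$ applied separately to the two binomial summands of $RR_{n-1}^{l}$, the paper merely packaging that summation as a comparison of coefficients of $a^{n-1}$ in the geometric-series identity $\sum_{l=0}^{k}(a+1)^{n-1+l}=(a+1)^{n-1}\bigl((a+1)^{k+1}-1\bigr)/a$. Your attention to the $l=0$ boundary term and to the convention $\binom{-1}{0}=0$ is the right call and is consistent with the paper's usage elsewhere (e.g.\ Theorem~\ref{RRofSn} needs $RR_{n-1}^{0}=1$ for all $n\ge 1$).
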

\begin{proof}
Compare the coefficients of $a^{n-1}$ in the equality 
$$
(a+1)^{n-1}+(a+1)^{n}+\cdots+(a+1)^{n+k-1}=
(a+1)^{n-1}\left((a+1)^{k+1}-1\right)/a. 
$$
\end{proof}

\begin{proof}[Proof of Theorem~\ref{RRofSn}]
By Proposition~\ref{localization} we have 
$$
RR_{\rm loc}(S^n)+
\sum_{l=1}^{k-1}RR_{\rm loc}(P_{(l)})+RR_{\rm loc}(M_{(k)})=RR(X_k),  
$$and hence, 
$RR_{\rm loc}(S^n)=RR_{n-1}^0$ for $n\ge 1$ by 
Theorem~\ref{RRofXk}, Theorem~\ref{RRofBS}, 
Proposition~\ref{RRofMk} and Lemma~\ref{binom}. 
For $n=0$ we have $RR_{\rm loc}(S^0)=2$ by definition. 
\end{proof}


\bibliographystyle{amsplain}
\bibliography{reference}
\end{document}